\newtheorem{sat}{Theorem}[section]		
\newtheorem{lem}[sat]{Lemma}
\newtheorem{kor}[sat]{Corollary}			
\newtheorem{prop}[sat]{Proposition}
\newtheorem*{defi*}{Definition}			
\newtheorem*{bei*}{Example}
\newtheorem*{sat*}{Theorem}				
\newtheorem*{kor*}{Corollary}
\newtheorem*{rmk*}{Remark}				
\newtheorem*{quest*}{Question}	
\newtheorem{claim}{Claim}	
\let\ssection=\section
\renewcommand{\section}{\setcounter{equation}{0}\ssection}
\newtheorem*{namedtheorem}{\theoremname}
\newcommand{\theoremname}{testing}
\newenvironment{named}[1]{\renewcommand{\theoremname}{#1}\begin{namedtheorem}}{\end{namedtheorem}}
\theoremstyle{remark}
\newtheorem*{bem}{Remark}
\newtheorem{bei}{Example}
\newtheorem*{namedtheoremr}{\theoremnamer}
\newcommand{\theoremnamer}{testing}
			\newcommand{\BH}{\mathbb H}
\newcommand{\BR}{\mathbb R}			
			\newcommand{\BZ}{\mathbb Z}
\newcommand{\CA}{\mathcal A}		
		\newcommand{\calD}{\mathcal D}
\newcommand{\CI}{\mathcal I}		\newcommand{\CJ}{\mathcal J}
		\newcommand{\CN}{\mathcal N}
		\newcommand{\CR}{\mathcal R}
\newcommand{\actson}{\curvearrowright}
\newcommand{\bs}{\backslash}
\DeclareMathOperator{\SL}{SL}		
\DeclareMathOperator{\PSL}{PSL}		
\DeclareMathOperator{\Id}{Id}		
\DeclareMathOperator{\vol}{vol}		
\DeclareMathOperator{\tr}{Tr}
\newcommand{\comment}[1]{}
\DeclareMathOperator{\Stab}{Stab}
\DeclareMathOperator{\supp}{Supp}
\newcommand{\fsubd}{\mathrel{{\scriptstyle\searrow}\kern-1ex^d\kern0.5ex}}
\newcommand{\bsubd}{\mathrel{{\scriptstyle\swarrow}\kern-1.6ex^d\kern0.8ex}}
\renewcommand{\epsilon}{\varepsilon}
\renewcommand{\le}{\leqslant}
\renewcommand{\ge}{\geqslant}
\renewcommand{\emptyset}{\varnothing}
\begin{document}

\title[]{Counting and equidistribution of reciprocal geodesics and dihedral groups}
  \author{Viveka Erlandsson}
  \address{School of Mathematics, University of Bristol \\ Bristol BS8 1UG, UK {\rm and}  \newline ${ }$ \hspace{0.2cm} Department of Mathematics and Statistics, UiT The Arctic University of  \newline ${ }$ \hspace{0.2cm} Norway}
  \email{v.erlandsson@bristol.ac.uk}
\thanks{The first author gratefully acknowledges support from EPSRC grant EP/T015926/1 and UiT Aurora Center for Mathematical Structures in Computations (MASCOT)}
\author{Juan Souto}
\address{UNIV RENNES, CNRS, IRMAR - UMR 6625, F-35000 RENNES, FRANCE}
\email{jsoutoc@gmail.com}

\begin{abstract}
We study the growth of the number of conjugacy classes of infinite dihedral subgroups of lattices in $\PSL_2\BR$, generalizing earlier work of Sarnak \cite{Sarnak} and Bourgain-Kontorovich \cite{Bourgain-Kontorovich} on the growth of the number of reciprocal geodesics on the modular surface. We also prove that reciprocal geodesics are equidistributed in the unit tangent bundle.
\end{abstract}

\maketitle

\section{}

In this note we are interested in counting conjugacy classes of {\em infinite dihedral subgroups}, that is subgroups isomorphic to $(\BZ/2\BZ)*(\BZ/2\BZ)$, of lattices $\Gamma\subset\PSL_2\BR$. Evidently, we will only care about lattices $\Gamma$ with 2-torsion, that is lattices that have elements of order two---otherwise $\Gamma$ has no infinite dihedral subgroups. With the action of $\Gamma$ on the hyperbolic plane $\BH^2$ in mind, we refer to the elements of order two as {\em involutions}. 

Discrete infinite dihedral subgroup of $\PSL_2\BR$, for example those which arise as subgroups of a lattice, preserve a unique geodesic $\CA_D$ in $\BH^2$, the {\em axis} of $D$. We will refer to the length $\ell(D)$ of the quotient $D\bs\CA_D$ as the {\em length} of $D$. Our first goal is to study the asymptotic behaviour of the number of conjugacy classes of dihedral subgroups of $\Gamma$ of at most length $L$, but before stating a precise result we need some notation that will be used throughout the paper. We will denote by 
$$\CI_\Gamma=\{\gamma\in\Gamma\setminus\Id,\ \gamma^2=\Id\}$$ 
the set of involutions in $\Gamma$, by 
$$\calD_\Gamma=\{\text{subgroups }D\subset\Gamma\text{ isomorphic to }(\BZ/2\BZ)*(\BZ/2\BZ)\}$$
the set of all infinite dihedral subgroups of $\Gamma$, and by
$$\calD_\Gamma(L)=\{D\in\calD_\Gamma\text{ with }\ell(D)\le L\}$$
that consisting of infinite dihedral subgroups of length at most $L$. Note that the action of $\Gamma$ on itself by conjugation induces actions on $\CI_\Gamma$, $\calD_\Gamma$, and $\calD_\Gamma(L)$---normalizers $\CN_\Gamma(\cdot)$ become then stablizers. 

\begin{sat}\label{sat sarnak}
For every lattice $\Gamma\subset\PSL_2\BR$ with 2-torsion we have
$$\vert\Gamma\bs\calD_\Gamma(L)\vert\sim \frac{C(\Gamma)}{\vert\chi^{or}(\Gamma\bs\BH^2)\vert}\cdot e^L$$
where $\chi^{or}(\Gamma\bs\BH^2)$ is the Euler characteristic of the orbifold $\Gamma\bs\BH^2$, where 
$$C(\Gamma)=\frac 14\cdot\left(\sum_{\sigma\in\Gamma\bs\CI_\Gamma}\frac 1{\vert\CN_\Gamma(\sigma)\vert}\right)^2$$
and $\sim$ means that the ratio between both quatities tends to $1$ when $L\to\infty$.
\end{sat}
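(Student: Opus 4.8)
The plan is to convert the count of conjugacy classes of dihedral subgroups into a hyperbolic lattice-point count over the fixed points of the involutions. First I would set up the dictionary. Each $\sigma\in\CI_\Gamma$ fixes a unique point $x_\sigma\in\BH^2$, the assignment $\sigma\mapsto x_\sigma$ is injective, and $\CN_\Gamma(\sigma)=\Stab_\Gamma(x_\sigma)$ is a finite cyclic group. For two distinct involutions $\sigma,\tau$ the product $\sigma\tau$ is the hyperbolic translation along the geodesic through $x_\sigma$ and $x_\tau$ with translation length $2\,d(x_\sigma,x_\tau)$; hence $D=\langle\sigma,\tau\rangle$ is infinite dihedral, its axis is that geodesic, and a short computation with the induced $\BZ$-action shows that the fixed points of the involutions of $D$ sit on the axis spaced exactly $d(x_\sigma,x_\tau)$ apart, so $\ell(D)=d(x_\sigma,x_\tau)$. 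Conversely every $D\in\calD_\Gamma$ is generated by a pair of involutions with adjacent fixed points. This produces a surjective $\Gamma$-equivariant map $\Phi\colon\CP(L)\to\calD_\Gamma(L)$, where $\CP(L)$ is the set of ordered pairs $(\sigma,\tau)$ of distinct involutions with $d(x_\sigma,x_\tau)\le L$.

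Next I would pin down the fibres of $\Phi$ modulo symmetry. For $D$ with $\CN_\Gamma(D)=D$, the fibre $\Phi^{-1}(D)$ consists of the ordered adjacent pairs along the axis, and $D$ acts on it with exactly two orbits (translations preserve the orientation of a pair while the reflections interchange the two orientations, and tracking the parity of the fixed points shows these never merge). Orbit counting then gives $|\Gamma\bs\CP(L)|=\sum_{[D]\in\Gamma\bs\calD_\Gamma(L)}|\CN_\Gamma(D)\bs\Phi^{-1}(D)|$, so that $|\Gamma\bs\CP(L)|=2\,|\Gamma\bs\calD_\Gamma(L)|$ up to the contribution of the exceptional groups with $\CN_\Gamma(D)\supsetneq D$. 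Such $D$ carry an extra symmetry of their axis, i.e. the axis also supports either a hyperbolic element of half the translation length or an involution at a midpoint, and I expect the number of these of length $\le L$ to be $o(e^L)$; establishing this negligibility is the step I expect to require the most care, and it is the main obstacle to a clean factor of $2$.

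It then remains to estimate $|\Gamma\bs\CP(L)|$. Moving the first coordinate of a pair onto representatives $\sigma_1,\dots,\sigma_k$ of $\Gamma\bs\CI_\Gamma$ (a finite list, since the orbifold has finitely many cone points) with $y_i=x_{\sigma_i}$ and $N_i=|\CN_\Gamma(\sigma_i)|$ reduces the count to $\sum_i|\Stab_\Gamma(y_i)\bs\{v\in V:0<d(y_i,v)\le L\}|$, where $V=\bigsqcup_j\Gamma y_j$ is the set of fixed points of all involutions. Because $\Stab_\Gamma(y_i)$ is a rotation group fixing $y_i$, it preserves the ball $B(y_i,L)$ and acts freely away from its centre, so this equals $\tfrac1{N_i}\sum_j|\Gamma y_j\cap B(y_i,L)|+O(1)$. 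The classical hyperbolic lattice-point theorem gives $|\Gamma y_j\cap B(y_i,L)|\sim\frac1{N_j}\frac{\area B(y_i,L)}{\area(\Gamma\bs\BH^2)}$, and with $\area B(y_i,L)=2\pi(\cosh L-1)\sim\pi e^L$ and Gauss--Bonnet $\area(\Gamma\bs\BH^2)=2\pi|\chi^{or}(\Gamma\bs\BH^2)|$ this is $\sim\frac{1}{N_j}\frac{e^L}{2|\chi^{or}(\Gamma\bs\BH^2)|}$.

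Summing over $j$ and then over $i$ the two sums factor, giving $|\Gamma\bs\CP(L)|\sim\frac{e^L}{2|\chi^{or}(\Gamma\bs\BH^2)|}\bigl(\sum_i\tfrac1{N_i}\bigr)^2$. Dividing by the factor of $2$ from the fibre count yields $|\Gamma\bs\calD_\Gamma(L)|\sim\frac{1}{4|\chi^{or}(\Gamma\bs\BH^2)|}\bigl(\sum_{\sigma\in\Gamma\bs\CI_\Gamma}\tfrac1{|\CN_\Gamma(\sigma)|}\bigr)^2e^L$, which is exactly the stated asymptotic with $C(\Gamma)=\tfrac14(\sum 1/|\CN_\Gamma(\sigma)|)^2$. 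Besides the negligibility of exceptional groups, the remaining points to check are the uniformity of the lattice-point asymptotic (applied only for the finitely many base pairs $(y_i,y_j)$, so harmless) and the harmless $O(1)$ corrections coming from the centres $v=y_i$.
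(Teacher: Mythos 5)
Your proposal is correct and runs along essentially the same lines as the paper: parametrize conjugacy classes of infinite dihedral subgroups by $\Gamma$-classes of ordered pairs of involutions, identify involutions with their fixed points so that $\ell(\langle\sigma,\tau\rangle)=d_{\BH^2}(p_\sigma,p_\tau)$, and reduce to Delsarte's lattice-point theorem plus Gau\ss--Bonnet; this is precisely Proposition \ref{prop map}, Corollary \ref{prop enumerate} and Section \ref{sec sarnak}. The one genuine difference is bookkeeping. You count $\Gamma\bs\CP(L)$ directly, using the exact fibre count $2$ over self-normalizing groups, and so obtain the factored constant $\frac14\bigl(\sum_\sigma 1/\vert\CN_\Gamma(\sigma)\vert\bigr)^2$ immediately. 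The paper never forms $\Gamma\bs\CP(L)$: it works with the covering set $\bigsqcup_{\sigma\in\CJ}\{\sigma\}\times(\CI_\Gamma\setminus\{\sigma\})$, whose fibre over the class of $D=\langle\sigma,\bar\sigma\rangle$ has cardinality at most $\vert\CN_\Gamma(\sigma)\vert+\vert\CN_\Gamma(\bar\sigma)\vert$, with equality for maximal $D$ (Lemma \ref{lem bound fiber}); sandwiching between the resulting upper bound for $\calD_\Gamma^{\max}$ and lower bound for $\calD_\Gamma$ yields the constant in the form \eqref{eq c}, which a symmetrization identity converts into your product form. Your route buys a cleaner constant at the cost of the exceptional-group step; the paper's route needs the analogous negligibility of non-maximal groups anyway, so the two arguments are of equal total weight.

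On the step you flag as the main obstacle: it is much easier than you anticipate, and it is exactly Lemma \ref{lem most maximal}. Any $D$ with $\CN_\Gamma(D)\supsetneq D$ is in particular non-maximal, since $\CN_\Gamma(D)\subset\Stab_\Gamma(\CA_D)$ and the latter is the unique maximal dihedral subgroup containing $D$. A non-maximal $D$ sits with index $k\ge 2$ in $D'=\Stab_\Gamma(\CA_D)$, so $\ell(D')=\ell(D)/k\le\frac12\ell(D)$, and an infinite dihedral group contains only $\lfloor 3k/2\rfloor$ conjugacy classes of dihedral subgroups of index at most $k$. Since lengths of dihedral subgroups of a lattice are bounded below by some $\epsilon_0>0$, the number of non-maximal classes of length at most $L$ is at most $\frac{3L}{2\epsilon_0}\vert\Gamma\bs\calD_\Gamma(L/2)\vert$; your own pair count already supplies the coarse bound $\vert\Gamma\bs\calD_\Gamma(L)\vert=O(e^L)$, so the exceptional contribution is $O(L\,e^{L/2})=o(e^L)$, as you hoped. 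One small slip in your fibre analysis: reflections in $D$ do not ``interchange the two orientations'' as a class function --- conjugation by a reflection reverses the order of a pair while staying in the same $D$-orbit, so each of the two orbits contains pairs of both orientations; the correct invariant, as your parity remark already indicates, is the $D$-conjugacy class of the first involution (equivalently the parity of its fixed point along the axis), matching the paper's statement that the generating pairs of $D$ fall into exactly the two classes $(\sigma,\bar\sigma)$ and $(\bar\sigma,\sigma)$. The remaining points you list (uniformity of Delsarte over the finitely many base pairs, the $O(1)$ from excluded centres, freeness of the rotation groups away from their fixed points) are all fine.
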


Theorem \ref{sat sarnak} generalizes a result of Sarnak. Recall namely that a hyperbolic element $\gamma$ in $\Gamma\subset\PSL_2\BR$ is {\em reciprocal} if it is conjugated to its inverse, that is if there is $\sigma\in \Gamma$ with $\gamma^{-1}=\sigma^{-1}\gamma\sigma$. An unoriented closed geodesic in the orbifold $\Gamma\backslash\BH^2$ is {\em reciprocal} if its free homotopy class is represented by a reciprocal element in $\Gamma$. Now, as already pointed out by Fricke and Klein \cite{Fricke-Klein} there is a bijection between (maximal) infinite dihedral subgroups and (primitive) reciprocal geodesics: the (unoriented) reciprocal geodesic in $\Gamma\bs\BH^2$ corresponding to the infinite dihedral group $D$ is the quotient $\gamma_D=T_D\bs\CA_D$ where $T_D$ is the index two subgroup of $D$ consisting of hyperbolic elements and where $\CA_D$ is, as above, the axis of $D$. The length of the dihedral group and the trace of the associated reciprocal geodesic are related by
\begin{equation}\label{eq trace length}
\tr(\gamma_D)=2\cdot\cosh(2^{-1}\ell(\gamma_D))=2\cdot\cosh(\ell(D)).
\end{equation}
and since $2\cdot \cosh(\ell)\sim e^\ell$ for large $\ell$ we get from Theorem \ref{sat sarnak} the following:

\begin{kor}\label{kor reciprocal}
For every lattice $\Gamma\subset\PSL_2\BR$ with 2-torsion we have
$$\vert\{\gamma\text{ reciprocal geodesics in }\Gamma\bs\BH^2\text{ with }\tr(\gamma)\le X\}\vert\sim\frac{C(\Gamma)}{\vert\chi^{or}(\Gamma\bs\BH^2)\vert}\cdot X$$
as $X\to\infty$. Here notation is as in Theorem \ref{sat sarnak}.
\end{kor}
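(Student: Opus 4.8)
The plan is to deduce Corollary \ref{kor reciprocal} directly from Theorem \ref{sat sarnak} by converting the cutoff on the trace into a cutoff on the length of the associated dihedral group. By the Fricke--Klein correspondence recalled above, sending a conjugacy class $D\in\Gamma\bs\calD_\Gamma$ to the reciprocal geodesic $\gamma_D=T_D\bs\CA_D$ sets up a bijection between $\Gamma\bs\calD_\Gamma$ and the set of (unoriented) reciprocal geodesics in $\Gamma\bs\BH^2$; here I use the correspondence in its full form, dropping the parenthetical restriction to maximal subgroups and primitive geodesics, so that non-primitive reciprocal geodesics are matched with non-maximal dihedral subgroups. Writing $R(X)$ for the number of reciprocal geodesics $\gamma$ with $\tr(\gamma)\le X$, this bijection gives
$$R(X)=\bigl\vert\{D\in\Gamma\bs\calD_\Gamma\ :\ \tr(\gamma_D)\le X\}\bigr\vert.$$

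First I would rewrite the defining inequality using \eqref{eq trace length}. Since $\tr(\gamma_D)=2\cosh(\ell(D))$ and $t\mapsto 2\cosh(t)$ is strictly increasing on $[0,\infty)$, the condition $\tr(\gamma_D)\le X$ is equivalent to $\ell(D)\le L(X)$, where $L(X):=\arccosh(X/2)$. Strict monotonicity is what guarantees that no dihedral subgroup is miscounted at the cutoff, so that
$$R(X)=\bigl\vert\Gamma\bs\calD_\Gamma(L(X))\bigr\vert.$$
Note that $L(X)\to\infty$ as $X\to\infty$, so Theorem \ref{sat sarnak} applies along the family $L=L(X)$.

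It then remains to chain two asymptotics. Theorem \ref{sat sarnak} yields
$$R(X)=\bigl\vert\Gamma\bs\calD_\Gamma(L(X))\bigr\vert\sim\frac{C(\Gamma)}{\vert\chi^{or}(\Gamma\bs\BH^2)\vert}\cdot e^{L(X)},$$
while the identity $X=2\cosh(L(X))=e^{L(X)}\bigl(1+e^{-2L(X)}\bigr)$ shows that $e^{L(X)}/X=(1+e^{-2L(X)})^{-1}\to 1$, that is $e^{L(X)}\sim X$. Because asymptotic equivalence is transitive, composing these two relations gives $R(X)\sim\frac{C(\Gamma)}{\vert\chi^{or}(\Gamma\bs\BH^2)\vert}\cdot X$, which is the claim.

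The argument is essentially a change of variables, so I do not anticipate a genuine obstacle once Theorem \ref{sat sarnak} is in hand. The only two points that require care are already visible above: that the substitution $L=L(X)$ is legitimate because $2\cosh$ is monotone, and that Theorem \ref{sat sarnak} is an asymptotic statement rather than an exact count, which is why the final step invokes transitivity of $\sim$ instead of substituting into a closed formula.
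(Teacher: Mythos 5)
Your change of variables is correct --- since $t\mapsto 2\cosh t$ is strictly increasing and $e^{L(X)}\sim X$ for $L(X)=\arccosh(X/2)$, Theorem \ref{sat sarnak} transfers cleanly to a trace count --- but the opening step contains a genuine error: the correspondence $D\mapsto\gamma_D$ is \emph{not} a bijection between $\Gamma\bs\calD_\Gamma$ and the set of all reciprocal geodesics, and the parentheticals ``(maximal)'' and ``(primitive)'' in the paper cannot be dropped. The map is a bijection only between maximal dihedral classes and primitive reciprocal geodesics; over non-primitive geodesics it fails to be injective. Concretely, if $D^{\max}=\langle\sigma,\bar\sigma\rangle$ is maximal with translation element $t=\sigma\bar\sigma$, the two index-two dihedral subgroups $\langle t^2,\sigma\rangle$ and $\langle t^2,\bar\sigma\rangle$ are not conjugate in $\Gamma$ (any conjugating element would preserve their common axis and hence lie in $\Stab_\Gamma(\CA_{D^{\max}})=D^{\max}$, where they are visibly non-conjugate), yet both have translation subgroup $\langle t^2\rangle$ and therefore determine the \emph{same} reciprocal geodesic, namely the square of $\gamma_{D^{\max}}$. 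More generally, as noted in section \ref{sec dihedral}, an infinite dihedral group contains $\left\lfloor\frac{3k}2\right\rfloor$ conjugacy classes of dihedral subgroups of index at most $k$, and these map onto only the $k$ powers $\gamma_{D^{\max}},\dots,\gamma_{D^{\max}}^k$: the map is 2-to-1 over even powers. Hence your exact identity $R(X)=\vert\Gamma\bs\calD_\Gamma(L(X))\vert$ is false, and the strict monotonicity of $2\cosh$ does nothing to save it.

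The error is asymptotically harmless, and the repair is exactly the paper's argument. Since $D\mapsto\gamma_D$ is surjective onto all reciprocal geodesics and restricts to a bijection from maximal classes onto primitive geodesics, and since the dictionary \eqref{eq trace length} holds for every $D$, you get the two-sided bound $\vert\Gamma\bs\calD_\Gamma^{\max}(L)\vert\le R(2\cosh L)\le\vert\Gamma\bs\calD_\Gamma(L)\vert$. By \eqref{eq most are max} --- that is, by Lemma \ref{lem most maximal} combined with the exponential growth rate established in the proof of Theorem \ref{sat sarnak}, which shows the proportion of non-maximal classes decays exponentially in $L$ --- both sides of the sandwich are asymptotic to $C(\Gamma)\cdot e^L/\vert\chi^{or}(\Gamma\bs\BH^2)\vert$. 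Your substitution $L=L(X)$ and the transitivity of $\sim$ then finish the proof as you wrote. In short: keep the change-of-variables half of your argument, but replace the claimed full-form bijection by this sandwich between the maximal and the full counts.
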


In the particular case that $\Gamma=\PSL_2\BZ$ we have $C(\PSL_2\BZ)=\frac 1{16}$ and $\chi^{or}(\PSL_2\BZ)=\frac{-1}6$, meaning that in the modular surface there are asymptotically $\frac 38X$ reciprocal geodesics with trace at most $X$. This asymptotic was first obtained, among other results, by Sarnak in \cite{Sarnak} and it was Sarnak's paper what got us interested in these matters.

Another paper that motivated us was one by Bourgain and Kontorovich \cite{Bourgain-Kontorovich} giving lower bounds for the number of ``low-lying'' reciprocal geodesics in the modular surface. More precisely they proved that for every $\delta>0$ there is a compact subset $K_\delta\subset\PSL_2\BZ\bs\BH^2$ with 
\begin{equation}\label{eq bourgain kontorovich}
\vert\{\gamma\text{ reciprocal geodesics in }K_\delta\text{ with }\tr(\gamma)\le X\}\vert>X^{1-\delta}
\end{equation}
for all $X>0$ large enough. Again we get a generalization of this theorem:

\begin{sat}\label{sat bourgain kontorovich}
Let $\Gamma$ be a lattice with 2-torsion. For every $\delta>0$ there is a compact set $K_\delta\subset\Gamma\bs\BH^2$ such that
$$\vert\Gamma\bs\{D\in\calD_\Gamma(L)\text{ with }D\bs\CA_D\subset K_\delta\}\vert> e^{(1-\delta)\cdot L}$$
for all $L>0$ large enough.
\end{sat}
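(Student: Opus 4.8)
The plan is to produce the required dihedral subgroups explicitly, starting from a single fixed involution together with a convex-cocompact free subgroup of $\Gamma$ whose critical exponent is close to $1$, so that the low-lying condition comes for free and the exponential count comes from the critical exponent. I would begin by recording the geometric dictionary. An infinite dihedral subgroup $D=\langle\sigma_1,\sigma_2\rangle\subset\Gamma$ is generated by two involutions, which are rotations by $\pi$ about the points $p_1,p_2\in\BH^2$ that they fix; these points lie on $\CA_D$, the product $\sigma_1\sigma_2$ translates along $\CA_D$ by $2\,d(p_1,p_2)$, and the involution fixed points of $D$ are exactly the points of $\CA_D$ at integer multiples of $d(p_1,p_2)$ measured from $p_1$. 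Thus $p_1,p_2$ are adjacent such points, $\ell(D)=d(p_1,p_2)$, and the arc $D\bs\CA_D$ is the projection to $\Gamma\bs\BH^2$ of the segment $[p_1,p_2]$. Consequently it suffices to exhibit more than $e^{(1-\delta)L}$ conjugacy classes of infinite dihedral subgroups with $d(p_1,p_2)\le L$ whose segment $[p_1,p_2]$ projects into one fixed compact set.

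Next I would fix one involution $\sigma\in\CI_\Gamma$ (available since $\Gamma$ has $2$-torsion) with fixed point $p\in\BH^2$, and for $w\in\Gamma$ set $D_w=\langle\sigma,w\sigma w^{-1}\rangle$. Since $w\sigma w^{-1}$ is an involution fixing $wp$, whenever $wp\ne p$ the group $D_w$ is infinite dihedral with $\ell(D_w)=d(p,wp)$ and with $p,wp$ as adjacent involution fixed points. I would let $w$ range over a convex-cocompact free (Schottky) subgroup $F\le\Gamma$, chosen torsion-free so that $F\cap\langle\sigma\rangle=\{1\}$ and hence $w\mapsto wp$ is injective on $F$. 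Because $F$ is convex-cocompact its convex core is compact, the orbit $F\cdot p$ lies within a bounded distance $r$ of the convex hull of the limit set, and therefore every segment $[p,wp]$ stays in the $r$-neighbourhood of that hull; its projection then lies in one fixed compact set $K_\delta$ for all $w\in F$ simultaneously, disposing of the low-lying requirement at once.

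For the count I would invoke the orbit-counting theorem for convex-cocompact groups (Lax--Phillips, or Roblin in general), giving $\#\{w\in F:\ d(p,wp)\le L\}\sim c\,e^{\delta(F)L}$, where $\delta(F)$ is the critical exponent of $F$. Distinct $w$ yield distinct points $wp$, and the assignment $wp\mapsto D_w$ is at most two-to-one: if $D_w=D_{w'}$ then both groups share the axis through $p$, so $wp,w'p$ are involution fixed points of the common group $D$; writing them at positions $md$ and $m'd$ on $\CA_D$ with $p$ at $0$ and $d=\ell(D)$, the equality $\langle\sigma,\tau_{md}\rangle=D=\langle\sigma,\tau_{d}\rangle$ forces $m=\pm1$. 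Finally conjugation preserves length, so each $\Gamma$-conjugacy class meets our family in subgroups of one common length whose axes all pass through $p$; a routine discreteness argument, using that the centralizer of $\sigma$ has order two and that $\CN_\Gamma(D)$ acts cocompactly on $\CA_D$, bounds the number of these by a constant. Combining these estimates produces $\gtrsim e^{\delta(F)L}$ distinct conjugacy classes, exceeding $e^{(1-\delta)L}$ as soon as $\delta(F)>1-\delta$.

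The only genuinely nontrivial input, and the step I expect to be the main obstacle, is the existence of a convex-cocompact free subgroup $F\le\Gamma$ with $\delta(F)>1-\delta$ and compact convex core. This is exactly the statement that the full entropy $1$ of $\Gamma$ is nearly captured inside a compact part of the surface: the geodesic flow on $T^1(\Gamma\bs\BH^2)$ has topological entropy $1$, and by the approximation of entropy by horseshoes one finds a compact locally maximal hyperbolic set of entropy $>1-\delta$, which is precisely the geodesic-flow invariant set of a Schottky subgroup $F\le\Gamma$ whose critical exponent equals that entropy. Once $F$ is in hand, everything else reduces to the dihedral dictionary and standard orbit counting.
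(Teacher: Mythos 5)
Your overall reduction is essentially the paper's own: the paper likewise produces a convex cocompact subgroup $\Gamma'\subset\Gamma$ with $\delta(\Gamma')>1-\delta$, fixes an involution $\sigma$, counts the orbit points $\Gamma'\cdot p_\sigma\cap B^*(p_\sigma,L)$, and passes to conjugacy classes of dihedral groups with uniformly bounded multiplicity (via its Proposition \ref{prop map}, giving a $2\cdot\vert\CN_\Gamma(\sigma)\vert$-to-one bound). Your hand-made version of the multiplicity bound is a correct substitute, with one small slip: the centralizer of $\sigma$ in $\Gamma$ is $\Stab_\Gamma(p)$, a finite cyclic group that may well have order larger than two (if $p$ is a cone point of order $2k$); only its finiteness, uniform over the finitely many conjugacy classes of torsion elements of the finitely generated group $\Gamma$, is needed, so the argument survives. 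Your convexity argument for the low-lying property (both endpoints of $[p,wp]$ lie at the same bounded distance from the convex hull of $\Lambda_F$, convexity of the distance function keeps the whole segment there, and the projection of that neighbourhood to $\Gamma\bs\BH^2$ is compact) is also fine, and is a reasonable replacement for the paper's device of choosing the involution \emph{inside} the convex cocompact subgroup so that the dihedral groups themselves sit in it.

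The genuine gap is in the step you yourself flag as the crux: producing a convex cocompact $F\le\Gamma$ with $\delta(F)>1-\delta$. Your justification is not a proof. First, Katok's horseshoe approximation theorem is a statement about compact systems, whereas $T^1(\Gamma\bs\BH^2)$ is noncompact for a nonuniform lattice; to run it you must first know that the full entropy $1$ is approximated by \emph{compactly supported} ergodic measures. This ``entropy does not escape to the cusp'' statement is true for finite-volume hyperbolic surfaces, but it is itself a nontrivial result and cannot just be invoked as folklore. Second, and more seriously, a compact locally maximal hyperbolic set of the geodesic flow is \emph{not} in general the invariant set of a Schottky subgroup: a horseshoe is a suspension of a shift made of geodesics trapped in a compact part, and it does not correspond to any subgroup of $\Gamma$. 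Extracting from it a free subgroup whose critical exponent nearly equals the horseshoe's entropy requires an additional ping-pong construction on many long periodic orbits together with an estimate that the resulting group retains almost all of the exponential orbit growth --- this is real work, not a reformulation. The paper fills exactly this hole by completely different, soft means (its Lemma \ref{lem get large delta}): using subgroup separability \cite{Scott} it passes to a finite-index subgroup whose orbifold has at least two cusps and a cone point of order $2$, takes the kernel $\Gamma''$ of a homomorphism to $\BZ$ (algebraic intersection number with cusp-to-cusp arcs) so that $\Gamma''$ has $2$-torsion and no parabolics, exhausts $\Gamma''$ by finitely generated subgroups $\Gamma_k$, and shows $\lambda_0(\Gamma_k\bs\BH^2)\to 0$ by Brooks--Ballmann (amenability of the cyclic cover) plus lifting compactly supported test functions, concluding $\delta(\Gamma_k)\to 1$ from the Patterson--Sullivan formula $\delta=\frac 12+\sqrt{\frac 14-\lambda_0}$. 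Until you either carry out the horseshoe-to-Schottky construction or substitute an argument of this spectral type, your proof of the key input is incomplete, even though everything downstream of it is correct.
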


While Theorem \ref{sat sarnak} and Theorem \ref{sat bourgain kontorovich} are close to plain vanilla generalizations of the Sarnak and Bourgain-Kontorovich theorems, what is somewhat different are the proofs. Or at least the point of view. Indeed, dropping all number theory from the picture and considering it all just as a geometric problem we reduce both theorems to very classical results on counting lattice points in the hyperbolic plane.

This simplified framework also helps to study how reciprocal geodesics are distributed in the unit tangent bundle $T^1\Gamma\bs\BH^2$. Again we think of them in terms of infinite dihedral subgroups. Although it is unoriented, the reciprocal geodesic $\gamma_D$ associate to the infinite dihedral group $D\in\calD_\Gamma$ corresponds to a unique geodesic flow orbit. We denote by $\vec\gamma_D$ the measure on $T^1\Gamma\bs\BH^2$ given by integrating along this geodesic flow orbit, normalized to have total mass equal to the length of the geodesic. In other words $\vec\gamma_D$ has total measure twice the length $\ell(D)$ of the infinite dihedral group $D$ itself. The behavior of the measures 
\begin{equation}\label{eq measures}
\mu_L=\sum_{D\in\calD_\Gamma(L)}\vec\gamma
\end{equation}
was already consider by Sarnak in \cite{Sarnak}, where he proved that there is a constant $c$ such that for every compact set $\Omega\subset T^1\PSL_2\BZ\backslash\BH^2$ one has
$$\lim\inf_{L\to\infty}\frac 1{\Vert\mu_L\Vert}\mu_L(\Omega)\ge c\cdot\vol(\Omega)$$
where $\vol$ is the probability Liouville measure on the unit tangent bundle, that is the probability measure induced by the Haar measure via the identitfication $T^1\Gamma\bs\BH^2=\Gamma\bs\PSL_2\BR$. 

Sarnak also conjectures in \cite{Sarnak} that, after normalization, the measures $\mu_L$ converge to $\vol$ when $L\to\infty$. This is the statement of the following result:

\begin{sat}\label{sat equidistribution modular surface}
If $\Gamma\subset\PSL_2\BR$ is a lattice which has 2-torsion then the measures $\mu_L$ as in \eqref{eq measures} converge projectively to the Liouville probability measure $\vol$. More precisely we have 
$$\lim_{L\to\infty}\frac 1{\Vert\mu_L\Vert}\int f\ d\mu_L=\int f\ d\vol$$
for every compactly supported continuous function on $T^1\Gamma\bs\BH^2$.
\end{sat}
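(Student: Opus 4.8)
The plan is to use the dihedral structure to turn the statement into a lattice-point equidistribution problem in $\BH^2$, and then to feed it into classical equidistribution of expanding spheres. It is worth stressing at the outset why one cannot simply invoke equidistribution of all closed geodesics of bounded length: by Theorem \ref{sat sarnak} the reciprocal geodesics are exponentially sparse among all closed geodesics of comparable length, so one must genuinely exploit the fact that a dihedral group is generated by a \emph{pair of involutions}. So the plan is to parametrize $\calD_\Gamma$ by $\Gamma$-orbits of such pairs, write $\mu_L$ as a sum of arc-length measures along geodesic segments joining a fixed involution fixed point to its $\Gamma$-translates, and reduce to equidistribution of the spheres $S(p_i,s)$ in $T^1\Gamma\bs\BH^2$.

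First I would set up the geometric dictionary. An involution $\sigma\in\CI_\Gamma$ is the rotation by $\pi$ about its fixed point $p_\sigma\in\BH^2$, and for involutions $\sigma\ne\sigma'$ the product $\sigma\sigma'$ translates along the geodesic through $p_\sigma,p_{\sigma'}$ by $2d(p_\sigma,p_{\sigma'})$; hence $\langle\sigma,\sigma'\rangle\in\calD_\Gamma$ has this geodesic as axis, these two involutions are consecutive along it, and $\ell(\langle\sigma,\sigma'\rangle)=d(p_\sigma,p_{\sigma'})$. Thus $(\sigma,\sigma')\mapsto\langle\sigma,\sigma'\rangle$ identifies $\Gamma$-orbits of ordered pairs of distinct involutions with $\calD_\Gamma$, each $D$ occurring with a fixed finite multiplicity coming from the $\CN_\Gamma(D)$-action on its consecutive pairs. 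Choosing representatives $\sigma_1,\dots,\sigma_k$ of $\Gamma\bs\CI_\Gamma$ with fixed points $p_1,\dots,p_k$ and unfolding the orbit sum, I would rewrite
$$\int f\,d\mu_L=\sum_{i,j}\frac{1}{w_{ij}}\sum_{\substack{\gamma\in\Gamma\\ d(p_i,\gamma p_j)\le L}}\int_{\gamma_D} f,\qquad D=\langle\sigma_i,\gamma\sigma_j\gamma^{-1}\rangle,$$
where $w_{ij}$ are explicit weights (matching the normalizer orders in $C(\Gamma)$) and $\int_{\gamma_D} f$ is the integral over one period of length $2\ell(D)$; the reflection symmetry of $D$ lets me express this period via the oriented segment $[p_i,\gamma p_j]$ and its involutive image, so it suffices to understand the arc-length measures carried by the segments $[p_i,\gamma p_j]$.

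Next I would carry out the equidistribution. Writing $g_s$ for the geodesic flow and $v_\gamma\in T^1_{p_i}\BH^2$ for the unit vector pointing from $p_i$ towards $\gamma p_j$, so that $[p_i,\gamma p_j]$ is swept out by $g_s v_\gamma$ for $s\in[0,d(p_i,\gamma p_j)]$, Fubini gives
$$\sum_{\gamma:\,d(p_i,\gamma p_j)\le L}\int_{[p_i,\gamma p_j]} f=\int_0^L\Big(\sum_{\gamma:\,s<d(p_i,\gamma p_j)\le L} f(g_s v_\gamma)\Big)\,ds.$$
For fixed $s$ the inner sum is controlled by two classical inputs: the angular equidistribution of the directions $v_\gamma$ on the circle $T^1_{p_i}\BH^2$ as $\gamma p_j$ ranges over the annulus (equidistribution of lattice points in sectors), which replaces the sum by $\#\{\gamma:\,s<d(p_i,\gamma p_j)\le L\}$ times the average of $f$ over the sphere $S(p_i,s)$; and the equidistribution of the sphere $S(p_i,s)=g_s(T^1_{p_i}\BH^2)$ towards $\vol$ as $s\to\infty$, a consequence of mixing of the geodesic flow for the lattice $\Gamma$. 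Since the counting function grows like $e^L$ and $\int_0^L(e^L-e^s)\,ds\asymp L\,e^L\asymp\Vert\mu_L\Vert$, the contribution of bounded $s$ is negligible, and integrating in $s$ yields $\int f\,d\mu_L\sim\Vert\mu_L\Vert\int f\,d\vol$, which is exactly the assertion after dividing by $\Vert\mu_L\Vert$.

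The main obstacle is upgrading counting to equidistribution with enough uniformity in $s$ to justify the interchange of the $s$-integral with the limit $L\to\infty$: one needs the spheres $S(p_i,s)$ to equidistribute uniformly (or effectively) in $s$, which is where the spectral gap for $\Gamma$ and quantitative mixing enter, and one must control the lattice points near the boundary sphere $d(p_i,\cdot)=L$. When $\Gamma\bs\BH^2$ is non-compact there is the additional point of ruling out escape of mass into the cusps, which again follows from effective equidistribution of the spheres. By contrast, the combinatorial bookkeeping of the multiplicities $w_{ij}$ and of primitivity of the dihedral groups is routine and of lower order, and all the geometric constants cancel against $\Vert\mu_L\Vert$.
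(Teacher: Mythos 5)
Your reduction is essentially the paper's own: the dictionary between ordered pairs of involutions and dihedral groups with $\ell(\langle\sigma,\sigma'\rangle)=d(p_\sigma,p_{\sigma'})$, the unfolding over representatives of $\Gamma\bs\CI_\Gamma$ with weights matching the normalizer orders, and the use of the reflection symmetry to express one period of $\gamma_D$ through the segment $[p_i,\gamma p_j]$ and its involutive image reproduce Proposition \ref{prop map} and the passage from $\mu_L$ to $\hat\mu_L$ and $\tilde\mu_L$ in the paper's proof. (One small point you correctly wave off as lower order: the fiber cardinality of the parametrization is only an \emph{equality} for maximal $D$, but non-maximal groups are exponentially rare by Lemma \ref{lem most maximal}.) What you then need is exactly Proposition \ref{prop sweedy pie}, and this is where your argument has a genuine gap.

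The problem is the step where, for fixed $s$, you replace $\sum_\gamma f(g_s v_\gamma)$ by the count times the average of $f$ over $S(p_i,s)$. This requires testing the empirical measure of the directions $v_\gamma$ against $f\circ g_s$, whose modulus of continuity on the circle $T^1_{p_i}\BH^2$ degrades like $e^{s}$; so what is needed is orbit-point counting in sectors of \emph{shrinking} angular width $\asymp e^{-s}$, not the classical fixed-sector equidistribution you invoke. Your proposed fix---effective equidistribution via the spectral gap---does not close this: effective estimates produce errors of the shape $e^{-\kappa L}$ times a Sobolev norm, and a test function localized at angular scale $e^{-s}$ has Sobolev norm growing like $e^{ks}$, so the relative error is small only for $s\le cL$ with $c$ depending on the (possibly tiny) gap $\kappa$. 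The complementary range $s\in[cL,L]$ carries a \emph{positive proportion} of $\Vert\mu_L\Vert\asymp L\,e^L$ and cannot be discarded the way the boundary range $s\in[L-C,L]$ can. The paper's section \ref{sec mixing} avoids this entirely by changing coordinates: it slices the sphere of radius $s=r\epsilon$ into arcs $I_i$ of \emph{fixed} hyperbolic length in $[\epsilon,2\epsilon]$ and counts orbit points in the flowed boxes $[U_i\cdot g_{(k-r)\epsilon}]$, where plain qualitative mixing (equidistribution of the expanding translates $I_i\cdot g_t$, exactly as for spheres in \cite{Bekka-Meyer}) gives a relative error depending only on $L-s$, uniformly over the precompact family of such arcs; no spectral gap is used, and only the negligible ranges $s\le k_1\epsilon$ and $s\ge L-k_2\epsilon$ are thrown away. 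Reorganizing your sector counting in this way repairs the argument and makes it essentially identical to the paper's.
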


\begin{bem}
It would be reasonable for the reader to just care about maximal dihedral subgroups, or about primitive reciprocal geodesics, or they might want to replace in \eqref{eq measures} the sum over infinite dihedral subgroups by a sum over reciprocal geodesics. The results stated above remain valid in all those settings because, as we will see below, the proportion of non-maximal dihedral subgroups (resp. non-primitive reciprocal geodesics) among all dihedral subgroups (reciprocal geodesics) with at most length $L$ tends exponentially fast to $0$. 
\end{bem}

Let us now breeze over the organization of the paper. In section \ref{sec dihedral} we recall a few facts about dihedral subgroups of Fuchsian groups, analyzing with some care how the set of conjugacy classes of such subgroups are parametrized by conjugacy classes of pairs of involutions. It follows that to count conjugacy classes of dihedral groups it suffices to count involutions, or rather their fixed points.This is used in section \ref{sec sarnak} to deduce Theorem \ref{sat sarnak} from Delsarte's classical orbit points counting result and in section \ref{sec bourgain kontorovich} to get Theorem \ref{sat bourgain kontorovich} from the fact that lattices in $\PSL_2\BR$ have convex cocompact subgroups with large critical exponent. Still working under the same framework, we prove Theorem \ref{sat equidistribution modular surface} in section \ref{sec uniform}, modulo another equidistribution result whose proof we defer to section \ref{sec mixing}, but which experts will probably consider evident.
\medskip

Before moving on we should mention another paper that got us interested in reciprocal geodesics: in \cite{Ara-Rob} Basmajian--Suzzi Valli prove versions of Sarnak's and Bourgain-Kontorovich's results where trace is replaced by word length with respect to a generating set of the fundamental group of the modular surface. Although we have not pursued this direction, it might well be that the methods we use here can also be used to recover the Basmajian--Suzzi Valli theorems.

\subsection*{Acknowledgements} First and foremost we would like to thank Peter Sarnak for a few really useful and nice e-mails that got us interested in this topic. The second author thanks both the first author's EPSRC grant EP/T015926/1 and the University of Bristol for their hospitality while most of this work was completed. 

\section{}\label{sec dihedral}

Let $\Gamma\subset\PSL_2\BR$ be a discrete, non-elementary, finitely generated subgroup which has 2-torsion. With $\calD_\Gamma$ and $\calD_\Gamma(L)$ as above, let $\calD_\Gamma^{\max}\subset\calD_\Gamma$ and $\calD_\Gamma^{\max}(L)\subset\calD_\Gamma(L)$ be the corresponding sets of maximal infinite dihedral subgroups of $\Gamma$. The goal of this section is to prove the following:

\begin{prop}\label{prop map}
Let $\Gamma\subset\PSL_2\BR$ be a discrete finitely generated subgroup. Suppose that the set $\CI_\Gamma$ of order 2 elements in $\Gamma$ is non-empty, denote by $p_\sigma$ the unique fixed points of $\sigma\in\CI_\Gamma$, and let $\CJ\subset\CI_\Gamma$ be a set of representatives of the set $\Gamma\bs\CI_\Gamma$ of all $\Gamma$-conjugacy classes. The map
\begin{equation}\label{eq map cloud}
\begin{split}
\pi_L:&\bigsqcup_{(\sigma,\bar\sigma)\in\CJ\times\CJ}\left(\Gamma\cdot p_{\bar\sigma}\cap B^*(p_\sigma,L)\right)\to\Gamma\bs\calD_\Gamma(L)\\
\pi_L:&(\sigma,\bar\sigma,\gamma\cdot p_{\bar\sigma})\mapsto\Gamma\text{-conjugacy class of }\langle\sigma,\gamma\bar\sigma\gamma^{-1}\rangle
\end{split}
\end{equation}
is surjective for all $L>0$. Moreover, for $D=\pi_L(\sigma,\bar\sigma,\gamma\cdot p_{\bar\sigma})$ we have 
$$\vert\pi_L^{-1}(D)\vert\le \vert\CN_\Gamma(\sigma)\vert+\vert\CN_\Gamma(\bar\sigma)\vert$$
with equality if $D\in\calD_\Gamma^{\max}$. Here $B^*(p,L)=\{q\in\BH^2\text{ with }0<d_{\BH^2}(p,q)\le L\}$ is the punctured ball of radius $L$ and centered at $p$.
\end{prop}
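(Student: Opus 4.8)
The plan is to realize the fibre of $\pi_L$ as a set of ordered pairs of involutions and to count those pairs by orbit--stabilizer, isolating the whole subtlety in the structure of the normalizer $\CN_\Gamma(D)$. First I would reformulate the domain. Since an involution in $\PSL_2\BR$ is the rotation by $\pi$ about its fixed point, there is at most one involution $\tau_x\in\CI_\Gamma$ fixing a given point $x\in\BH^2$; thus the datum $(\sigma,\bar\sigma,\gamma\cdot p_{\bar\sigma})$ records exactly the ordered pair of involutions $(\sigma,\tau_{\gamma\cdot p_{\bar\sigma}})$, with $\sigma\in\CJ$ and $\bar\sigma$ forced to be the $\CJ$-representative of $[\tau_{\gamma\cdot p_{\bar\sigma}}]$. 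Two involutions with distinct fixed points $p,q$ generate an infinite dihedral subgroup whose axis is the geodesic through $p,q$ and whose length is $d_{\BH^2}(p,q)$ (their product is the hyperbolic translation of length $2\,d_{\BH^2}(p,q)$ along that geodesic), so $\pi_L$ is well defined into $\Gamma\bs\calD_\Gamma(L)$ and the fibre over the class of a fixed $D\in\calD_\Gamma(L)$ is in bijection with
$$\CP_D=\{(\mu,\nu):\mu\in\CJ,\ \nu\in\CI_\Gamma,\ \langle\mu,\nu\rangle\text{ is }\Gamma\text{-conjugate to }D,\ 0<d_{\BH^2}(p_\mu,p_\nu)\le L\}.$$
Surjectivity is then immediate: any $D'\in\calD_\Gamma(L)$ is generated by two neighbouring involutions $\tau_1,\tau_2$ with consecutive fixed points on $\CA_{D'}$; writing $\tau_1=g_1\sigma g_1^{-1}$ and $\tau_2=g_2\bar\sigma g_2^{-1}$ with $\sigma,\bar\sigma\in\CJ$ and setting $\gamma=g_1^{-1}g_2$, the triple $(\sigma,\bar\sigma,\gamma\cdot p_{\bar\sigma})$ lies in the domain and maps to $[D']$.

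Next I would count $\CP_D$. The condition that $\langle\mu,\nu\rangle$ be conjugate to $D$ together with $d_{\BH^2}(p_\mu,p_\nu)\le L$ forces $d_{\BH^2}(p_\mu,p_\nu)=\ell(D)$, i.e.\ $\mu,\nu$ are neighbouring involutions; hence $\Gamma$ acts by simultaneous conjugation on the set of all such ordered pairs, and its orbits correspond to the $\CN_\Gamma(D)$-orbits on ordered pairs of neighbouring involutions generating $D$ itself. The key local computation is that the stabilizer of a pair $(\mu,\nu)$ is trivial: an element of $\Gamma$ centralizing both $\mu$ and $\nu$ fixes the two distinct points $p_\mu,p_\nu$, hence is the identity. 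Consequently, inside a single $\Gamma$-orbit $O$ and for each conjugacy class $c$ occurring as a first coordinate, the number of pairs of $O$ whose first coordinate equals the representative $\sigma_c\in\CJ$ is exactly $|\CN_\Gamma(\sigma_c)|$: the solutions $g$ of $g\mu_0g^{-1}=\sigma_c$ form a coset of $\CN_\Gamma(\mu_0)$, and the trivial-stabilizer statement makes $g\mapsto g\nu_0g^{-1}$ injective on that coset. Thus $|\pi_L^{-1}(D)|=\sum_O\sum_{c}|\CN_\Gamma(\sigma_c)|$, summed over the orbits $O$ and over the first-coordinate classes $c$ of each.

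It remains to control the orbits, and this is the heart of the matter. I would first show that $M:=\CN_\Gamma(D)$ acts faithfully on $\CA_D$ (an element acting trivially fixes two points), hence embeds as a discrete subgroup of $\Isom(\CA_D)\cong\Isom(\BR)$ containing the dihedral image of $D$ and carrying reflections; therefore $M$ is itself infinite dihedral, equal to the maximal dihedral subgroup containing $D$, with $D\triangleleft M$. Normality forces the generating translation of $M$ to preserve the evenly spaced fixed-point set of the involutions of $D$, which pins the index to $[M:D]\in\{1,2\}$, with $[M:D]=1$ exactly when $D$ is maximal. When $M=D$, the group $D$ has precisely two orbits on ordered neighbouring pairs, distinguished by which of the two $D$-classes of involutions contains the first fixed point; these classes have $\CJ$-representatives $\sigma,\bar\sigma$, and summing the two orbit contributions yields $|\CN_\Gamma(\sigma)|+|\CN_\Gamma(\bar\sigma)|$, giving equality. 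When $[M:D]=2$, any $g\in M\setminus D$ translates the fixed-point set by $\ell(D)$, which simultaneously merges the two $D$-orbits into a single $M$-orbit and fuses the two $D$-classes of involutions into one $\Gamma$-class; the single remaining orbit then contributes only $|\CN_\Gamma(\sigma)|$, while the bound $|\CN_\Gamma(\sigma)|+|\CN_\Gamma(\bar\sigma)|$ reads $2|\CN_\Gamma(\sigma)|$, so the inequality holds and is in fact strict.

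The main obstacle is precisely this last step: establishing that $\CN_\Gamma(D)$ is infinite dihedral of index at most two over $D$, and tracking how the nontrivial coset makes orbit-merging and conjugacy-class-fusion occur in lockstep. It is exactly this coupling that keeps $|\pi_L^{-1}(D)|$ bounded by $|\CN_\Gamma(\sigma)|+|\CN_\Gamma(\bar\sigma)|$ and forces equality precisely in the maximal case, which is the content of the proposition.
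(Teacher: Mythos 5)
Your strategy is essentially the paper's: identify each involution with its unique fixed point so that the domain of $\pi_L$ becomes a set of ordered pairs of involutions generating conjugates of $D$, get surjectivity from the fact that every infinite dihedral group is generated by a pair of adjacent involutions, and count the fibre orbit-by-orbit using that the simultaneous stabilizer of a pair $(\mu,\nu)$ is trivial (an element centralizing both fixes the two distinct points $p_\mu,p_\nu$). The paper packages this as the chain of maps \eqref{eq parametrisation quotient}, \eqref{eq quotient}, \eqref{eq map not complete} together with Lemma \ref{lem bound fiber}; your coset computation yielding $\vert\CN_\Gamma(\sigma_c)\vert$ points per $\Gamma$-orbit is precisely its fibre count, and your bijection of the fibre with $\CP_D$ is sound (a minor quibble: generation alone forces $d_{\BH^2}(p_\mu,p_\nu)=\ell(D)$; the constraint $d\le L$ is then automatic since $\ell(D)\le L$).

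There is, however, a genuinely false structural claim in your last step: $\CN_\Gamma(D)$ is in general \emph{not} equal to the maximal dihedral subgroup $\Stab_\Gamma(\CA_D)$, and $\CN_\Gamma(D)=D$ does not imply that $D$ is maximal. Normalize so that the reflections of $\Stab_\Gamma(\CA_D)$ fix the points $\frac 12\BZ$ on $\CA_D\cong\BR$, and let $D$ be the index-$k$ dihedral subgroup with reflection points $\frac k2\BZ$; a translation by $a\in\BZ$ normalizes $D$ if and only if $k\,\vert\, 2a$, so for odd $k\ge 3$ one gets $\CN_\Gamma(D)=D$ with $D$ far from maximal. Hence your assertions that $M=\CN_\Gamma(D)$ ``equals the maximal dihedral subgroup containing $D$,'' that $[M:D]=1$ \emph{exactly} when $D$ is maximal, and that equality holds ``precisely in the maximal case'' are all wrong: the correct criterion for equality is that $D$ be \emph{self-normalizing} (which is how the paper phrases its 2-to-1 lemma), and self-normalizing non-maximal $D$ attain equality as well. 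Fortunately this overclaim does not sink the proof of the proposition as stated: your dichotomy $[\CN_\Gamma(D):D]\in\{1,2\}$ is correct (normality does pin the index), the bound $\vert\pi_L^{-1}(D)\vert\le\vert\CN_\Gamma(\sigma)\vert+\vert\CN_\Gamma(\bar\sigma)\vert$ holds in both cases, and maximality does give $\CN_\Gamma(D)\subseteq\Stab_\Gamma(\CA_D)=D$, since $\CN_\Gamma(D)$ preserves the unique $D$-invariant geodesic. So after deleting the false identification of $\CN_\Gamma(D)$ with $\Stab_\Gamma(\CA_D)$ and weakening ``exactly when maximal'' to ``if maximal,'' your argument proves the proposition.
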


The interest of Proposition \ref{prop map} is that, as we will exploit in the next sections, it reduces counting dihedral subgroups to counting lattice points. Indeed, the following is an immediate corollary:

\begin{kor}\label{prop enumerate}
Let $\Gamma\subset\PSL_2\BR$ be a discrete finitely generated subgroup. Suppose that the set $\CI_\Gamma$ of order 2 elements in $\Gamma$ is non-empty, denote by $p_\sigma$ the unique fixed points of $\sigma\in\CI_\Gamma$, and let $\CJ\subset\CI_\Gamma$ be a set of representatives of $\Gamma\bs\CI_\Gamma$. Then we have
\begin{align*}
\vert\Gamma\bs\calD_\Gamma(L)\vert&\le \sum_{(\sigma,\bar\sigma)\in\CJ\times\CJ}\vert\Gamma\cdot p_{\bar\sigma}\cap B^*(p_\sigma,L)\vert\\
\vert\Gamma\bs\calD_\Gamma(L)\vert&\ge\sum_{(\sigma,\bar\sigma)\in\CJ\times\CJ}\frac {\vert\Gamma\cdot p_{\bar \sigma}\cap B^*(p_\sigma,L)\vert}{\vert\CN_\Gamma(\sigma)\vert+\vert\CN_\Gamma(\bar \sigma)\vert}\\
\vert\Gamma\bs\calD_\Gamma^{\max}(L)\vert&\le\sum_{(\sigma,\bar \sigma)\in\CJ\times\CJ}\frac {\vert\Gamma\cdot p_{\bar \sigma}\cap B^*(p_\sigma,L)\vert}{\vert\CN_\Gamma(\sigma)\vert+\vert\CN_\Gamma(\bar \sigma)\vert}
\end{align*}
for all $L$. \qed
\end{kor}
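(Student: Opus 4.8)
The plan is to derive all three inequalities from Proposition \ref{prop map} alone, by bookkeeping the fibers of the map $\pi_L$. Write $A$ for the domain of $\pi_L$, so that by construction
$$|A|=\sum_{(\sigma,\bar\sigma)\in\CJ\times\CJ}\bigl|\Gamma\cdot p_{\bar\sigma}\cap B^*(p_\sigma,L)\bigr|,$$
a finite sum of finite terms, since $\CJ$ is finite (a finitely generated Fuchsian group has only finitely many conjugacy classes of involutions) and since each $\Gamma$-orbit is discrete while $B^*(p_\sigma,L)$ is bounded. Partitioning $A$ according to the value of $\pi_L$ yields the identity $|A|=\sum_{D\in\Gamma\bs\calD_\Gamma(L)}|\pi_L^{-1}(D)|$.

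The first inequality is then immediate: surjectivity of $\pi_L$ (Proposition \ref{prop map}) forces $|\pi_L^{-1}(D)|\ge 1$ for every $D$, whence $|\Gamma\bs\calD_\Gamma(L)|\le|A|$, which is exactly the claimed upper bound.

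For the other two inequalities I would refine the fiber count summand by summand. For $D\in\Gamma\bs\calD_\Gamma(L)$ and $(\sigma,\bar\sigma)\in\CJ\times\CJ$, let $m_{(\sigma,\bar\sigma)}(D)$ be the number of points of the $(\sigma,\bar\sigma)$-summand that $\pi_L$ sends to $D$, so that $|\Gamma\cdot p_{\bar\sigma}\cap B^*(p_\sigma,L)|=\sum_D m_{(\sigma,\bar\sigma)}(D)$ and $|\pi_L^{-1}(D)|=\sum_{(\sigma,\bar\sigma)}m_{(\sigma,\bar\sigma)}(D)$. The one point needing care is that $m_{(\sigma,\bar\sigma)}(D)$ vanishes unless the classes $\sigma,\bar\sigma$ are precisely the two involution classes occurring in $D$ (possibly equal): two involutions generate a conjugate of $D$ only when they are the adjacent, generating reflections of that dihedral group, which pins down their $\Gamma$-conjugacy classes. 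For every such contributing pair the quantity $|\CN_\Gamma(\sigma)|+|\CN_\Gamma(\bar\sigma)|$ takes one and the same value $N_D$, being symmetric in the ordered pair and insensitive to whether the two classes coincide. Interchanging the order of summation then gives
$$\sum_{(\sigma,\bar\sigma)\in\CJ\times\CJ}\frac{|\Gamma\cdot p_{\bar\sigma}\cap B^*(p_\sigma,L)|}{|\CN_\Gamma(\sigma)|+|\CN_\Gamma(\bar\sigma)|}=\sum_{D\in\Gamma\bs\calD_\Gamma(L)}\frac{|\pi_L^{-1}(D)|}{N_D}.$$

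Both remaining inequalities now drop out of the fiber bound in Proposition \ref{prop map}. Since $|\pi_L^{-1}(D)|\le N_D$ for every $D$, each term on the right is at most $1$, so the right-hand side is at most $|\Gamma\bs\calD_\Gamma(L)|$, which is the second inequality. Discarding the non-maximal $D$ (all terms being nonnegative) and using that $|\pi_L^{-1}(D)|=N_D$ exactly when $D\in\calD_\Gamma^{\max}$, the right-hand side is at least $\sum_{D\in\Gamma\bs\calD_\Gamma^{\max}(L)}1=|\Gamma\bs\calD_\Gamma^{\max}(L)|$, which is the third. The main—indeed essentially the only—obstacle is the bookkeeping behind the displayed identity: one must verify that the denominator $|\CN_\Gamma(\sigma)|+|\CN_\Gamma(\bar\sigma)|$ is constant across all summands feeding a given fiber, so that it may be pulled out and matched against the fiber bound $N_D$. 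This is forced by the symmetry of that expression together with the description, supplied by Proposition \ref{prop map}, of which pairs $(\sigma,\bar\sigma)$ can produce $D$.
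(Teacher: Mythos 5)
Your proof is correct and takes essentially the same route as the paper, which offers no separate argument but states the corollary as an immediate consequence of Proposition \ref{prop map}: surjectivity of $\pi_L$ yields the first bound, and the fiber estimate $\vert\pi_L^{-1}(D)\vert\le\vert\CN_\Gamma(\sigma)\vert+\vert\CN_\Gamma(\bar\sigma)\vert$, with equality for $D\in\calD_\Gamma^{\max}$, yields the other two by exactly the double counting you carry out. Your verification that the denominator is constant across the summands feeding a given fiber is precisely the bookkeeping the paper leaves implicit (and, as you note, it is forced for maximal $D$ by the equality clause of the proposition, while for the middle inequality the one-sided fiber bound already suffices).
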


The remaining of this section is devoted to prove Proposition \ref{prop map}. We start by going over a few facts about infinite dihedral subgroups of our Fuchsian group $\Gamma$. An infinite dihedral subgroup $D$ contains a unique index two infinite cyclic subgroup $T_D$. Since the normalizer in $\PSL_2\BR$ of a parabolic subgroup is torsion free, we get that the infinite cyclic subgroup of any infinite dihedral subgroup $D$ of $\Gamma$ is hyperbolic. It follows that $D$ acts on a geodesic $\CA_D\subset\BH^2$---the action $D\actson\CA_D$ is conjugated to the standard action of the infinite dihedral subgroup on the real line: the length of $D\bs\CA_D$ is the length $\ell(D)$ of the dihedral group and the geodesic $\gamma_D=T_D\bs\CA_D$ is the reciprocal geodesic associated to $D$. The stabilizer $\Stab_\Gamma(\CA_D)$ of the axis is also an infinite dihedral group---it is in fact the unique maximal dihedral subgroup of $\Gamma$ containing $D$.

Note now that if $D\notin\calD_\Gamma^{\max}$ then $\ell(\Stab_\Gamma(\CA_D))\le\frac 12\ell(D)$. Note also that an infinite dihedral group contains exactly $\left\lfloor\frac{3\cdot k}2\right\rfloor$ conjugacy classes of dihedral subgroups of index at most $k$. In plain language this means that every non-maximal infinite dihedral subgroup is the child of a dihedral subgroup of at most half the length, and that dihedral subgroups don't have may kids. Out of these two observations we get bounds for the number of maximal infinite dihedral subgroups of bounded length:

\begin{lem}\label{lem most maximal}
If $\epsilon_0<\ell(D)$ for every $D\in\calD_\Gamma$ then we have
$$\vert\Gamma\bs\calD_\Gamma(L)\vert\ge\vert\Gamma\bs\calD_\Gamma^{\max}(L)\vert\ge\vert\Gamma\bs\calD_\Gamma(L)\vert-\frac{3\cdot L}{2\cdot \epsilon_0}\cdot\vert\Gamma\bs\calD_\Gamma(2^{-1}\cdot L)\vert$$
for all $L>0$.\qed
\end{lem}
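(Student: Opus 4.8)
The first inequality is immediate: $\calD_\Gamma^{\max}(L)\subseteq\calD_\Gamma(L)$ and this inclusion is $\Gamma$-equivariant, so passing to conjugacy classes gives $\vert\Gamma\bs\calD_\Gamma^{\max}(L)\vert\le\vert\Gamma\bs\calD_\Gamma(L)\vert$. All the content is in the second inequality, which amounts to bounding the number of $\Gamma$-conjugacy classes of \emph{non-maximal} dihedral subgroups of length at most $L$, i.e. the difference $\vert\Gamma\bs\calD_\Gamma(L)\vert-\vert\Gamma\bs\calD_\Gamma^{\max}(L)\vert$. The plan is to organize these non-maximal subgroups according to their maximal parent.

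For $D\in\calD_\Gamma$ write $M(D)=\Stab_\Gamma(\CA_D)$ for the unique maximal dihedral subgroup of $\Gamma$ containing $D$, as recalled before the lemma. Since $M(\gamma D\gamma^{-1})=\gamma M(D)\gamma^{-1}$, the assignment $D\mapsto M(D)$ descends to conjugacy classes, and by the first observation preceding the lemma a non-maximal $D$ satisfies $\ell(M(D))\le\frac12\ell(D)\le\frac12 L$, so $M(D)\in\calD_\Gamma^{\max}(2^{-1}L)$. I would therefore study the fibers of the map
$$\{[D]\in\Gamma\bs\calD_\Gamma(L):D\text{ non-maximal}\}\to\Gamma\bs\calD_\Gamma^{\max}(2^{-1}L),\qquad [D]\mapsto[M(D)],$$
and bound the difference above by the total size of these fibers.

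Next I would estimate a single fiber. A class over $[M]$, with $M$ maximal and $\ell(M)\le L/2$, has a representative $D$ with $M(D)=M$, hence $D\subseteq M$ and $\CA_D=\CA_M$. The key point is that two dihedral subgroups $D_1,D_2\subseteq M$ can be $\Gamma$-conjugate only by elements of $M$: if $\gamma D_1\gamma^{-1}=D_2$ then $\gamma$ preserves the common axis $\CA_M$, so $\gamma\in\Stab_\Gamma(\CA_M)=M$. Thus the fiber over $[M]$ is a quotient of the set of dihedral subgroups $D\subseteq M$ with $\ell(D)\le L$ taken up to $M$-conjugacy, and its cardinality is at most the number of such $M$-conjugacy classes. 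Now the length–index relation $\ell(D)=[M:D]\cdot\ell(M)$ together with the hypothesis $\ell(M)>\epsilon_0$ shows that $\ell(D)\le L$ forces the index $k=[M:D]$ to satisfy $k\le L/\ell(M)<L/\epsilon_0$; by the second observation $M$ has exactly $\lfloor 3k/2\rfloor$ conjugacy classes of dihedral subgroups of index at most $k$, so each fiber has at most $\lfloor 3L/(2\epsilon_0)\rfloor\le 3L/(2\epsilon_0)$ elements.

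Summing over the at most $\vert\Gamma\bs\calD_\Gamma^{\max}(2^{-1}L)\vert\le\vert\Gamma\bs\calD_\Gamma(2^{-1}L)\vert$ classes $[M]$ in the target then yields
$$\vert\Gamma\bs\calD_\Gamma(L)\vert-\vert\Gamma\bs\calD_\Gamma^{\max}(L)\vert\le\frac{3L}{2\epsilon_0}\cdot\vert\Gamma\bs\calD_\Gamma(2^{-1}L)\vert,$$
which rearranges to the claimed inequality. The only genuinely delicate step is the fiber estimate, namely verifying that $\Gamma$-conjugacy between sub-dihedral-groups sharing an axis is detected inside the axis stabilizer $M$; once this is in place, the rest is bookkeeping with the length–index relation and the two combinatorial observations already recorded.
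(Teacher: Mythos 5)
Your proof is correct and is essentially the paper's own argument: the lemma is stated there with \qed because it is meant to follow directly from the two observations recorded just before it, namely that every non-maximal $D$ lies in the maximal group $\Stab_\Gamma(\CA_D)$ of length at most $\tfrac 12\ell(D)$, and that an infinite dihedral group has exactly $\left\lfloor\frac{3k}2\right\rfloor$ conjugacy classes of dihedral subgroups of index at most $k$, with the index bounded by $L/\epsilon_0$ thanks to the lower bound $\epsilon_0$ on lengths. Your extra verification that $\Gamma$-conjugacy between dihedral subgroups of a maximal $M$ is implemented inside $M=\Stab_\Gamma(\CA_M)$ is a detail the paper leaves implicit (in fact surjectivity of the map from $M$-classes onto the fiber already suffices for the upper bound), but it is correct and in the same spirit.
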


Continuing with generalities about infinite dihedral subgroups note that any such $D\subset\Gamma$ is generated by two distinct involutions $\sigma$ and $\bar\sigma$ fixing the axis $\CA_D$. In fact, there are precisely two $D$-conjugacy classes of ordered pairs of involutions generating $D$, namely $(\sigma,\bar\sigma)$ and $(\bar\sigma,\sigma)$. 

In the oposite direction suppose that $\sigma\neq\bar\sigma\in\Gamma$ are distinct involutions. Then the group $D=\langle\sigma,\bar\sigma\rangle$ they generate is infinite dihedral and $\CA_D$ is the infinite geodesic passing through the unique fixed points $p_\sigma$ and $p_{\bar\sigma}$ of $\sigma$ and $\bar\sigma$ respectively. In those terms, the length of the dihedral group is given by 
\begin{equation}\label{eq length}
\ell(\langle\sigma,\bar\sigma\rangle)=d_{\BH^2}(p_\sigma,p_{\bar\sigma}).
\end{equation}

All of this gives us a way to parametrize the set of all infinite dihedral subgroups of $\Gamma$. As all along let $\CI_\Gamma\subset\Gamma$ be the set of all involutions in $\Gamma$, that is of all elements of order two. From the discussion above we get surjectivity of the map
\begin{equation}\label{eq parametrisation}
\CI_\Gamma\times\CI_\Gamma\setminus\Delta\to\calD_\Gamma,\ \ (\sigma,\bar\sigma)\mapsto\langle\sigma,\bar\sigma\rangle
\end{equation}
where $\Delta$ is the diagonal in $\CI_\Gamma\times\CI_\Gamma$. The group $\Gamma$ acts on $\CI_\Gamma$ by conjugation. The map \eqref{eq parametrisation} is equivariant under this action and the induced map
\begin{equation}\label{eq parametrisation quotient}
\Gamma\bs(\CI_\Gamma\times\CI_\Gamma\setminus\Delta)\to\Gamma\bs\calD_\Gamma
\end{equation}
is surjective. Recall that, as we pointd out earlier, every ordered pair of involutions generating the infinite dihedral group $D=\langle\sigma,\bar\sigma\rangle$ is conjugated, within $D$, to either $(\sigma,\bar\sigma)$ or $(\bar\sigma,\sigma)$. It follows that the map \eqref{eq parametrisation quotient} is at worst 2-to-1 and that it is exactly 2-to-1 over the set of self-normalizing infinite dihedral subgroups. We record these facts for later use:

\begin{lem}
The map \eqref{eq parametrisation quotient} is at most 2-to-1. Moreover, conjugacy classes of maximal infinite dihedral subgroups have exactly two preimages.\qed
\end{lem}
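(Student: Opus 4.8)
The plan is to fix an infinite dihedral subgroup $D\in\calD_\Gamma$ and analyze the fiber of the map \eqref{eq parametrisation quotient} over the class $[D]$ by reducing everything to ordered generating pairs of $D$ itself. First I would observe that any $\Gamma$-conjugacy class $[(\sigma,\bar\sigma)]$ lying in this fiber admits a representative whose two involutions generate $D$ on the nose: if $h\langle\sigma,\bar\sigma\rangle h^{-1}=D$ for some $h\in\Gamma$, then $(h\sigma h^{-1},h\bar\sigma h^{-1})$ is $\Gamma$-conjugate to $(\sigma,\bar\sigma)$ and generates $D$. Thus the fiber over $[D]$ is covered by the $\Gamma$-conjugacy classes of the ordered generating pairs of $D$.

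For the ``at most $2$-to-$1$'' bound I would then invoke the fact recorded just above the statement, namely that the ordered pairs of distinct involutions generating $D$ fall into exactly two $D$-conjugacy classes, represented by $(\sigma,\bar\sigma)$ and $(\bar\sigma,\sigma)$. Since $D\subset\Gamma$, two pairs that are $D$-conjugate are a fortiori $\Gamma$-conjugate, so these two $D$-classes contribute at most two distinct $\Gamma$-classes to the fiber. Hence the fiber over $[D]$ has at most two elements, which is the first assertion.

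For the second assertion I would show that when $D$ is maximal the two $D$-conjugacy classes remain distinct in $\Gamma$. Suppose some $g\in\Gamma$ conjugates $(\sigma,\bar\sigma)$ to $(\bar\sigma,\sigma)$, i.e. $g\sigma g^{-1}=\bar\sigma$ and $g\bar\sigma g^{-1}=\sigma$. Then $gDg^{-1}=\langle\bar\sigma,\sigma\rangle=D$, so $g$ normalizes $D$. The key geometric input is that a normalizing $g$ must preserve the axis: since $\CA_D$ is the unique $D$-invariant geodesic, the geodesic $g\CA_D=\CA_{gDg^{-1}}$ is again $D$-invariant and hence equals $\CA_D$, so $g\in\Stab_\Gamma(\CA_D)$. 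Because $D$ is maximal it coincides with the unique maximal dihedral subgroup $\Stab_\Gamma(\CA_D)$ containing it, whence $g\in D$. But then $(\sigma,\bar\sigma)$ and $(\bar\sigma,\sigma)$ would already be $D$-conjugate, contradicting that they represent the two \emph{distinct} $D$-conjugacy classes. Therefore no such $g$ exists, the two classes stay $\Gamma$-inequivalent, and the fiber over a maximal $[D]$ has exactly two elements.

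I expect the only genuine subtlety to be the step identifying the normalizer $\CN_\Gamma(D)$ with $\Stab_\Gamma(\CA_D)$, and hence with $D$ in the maximal case; everything else is bookkeeping with conjugacy classes. This identification rests entirely on the uniqueness of the $D$-invariant geodesic, which has already been established, so the argument should go through without further friction.
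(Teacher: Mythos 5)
Your proof is correct and follows essentially the same route as the paper, which deduces the lemma from the fact that the ordered generating pairs of $D$ form exactly two $D$-conjugacy classes, $(\sigma,\bar\sigma)$ and $(\bar\sigma,\sigma)$, and that maximal dihedral subgroups are self-normalizing. The only difference is that you make explicit the step the paper leaves implicit --- that any $g\in\Gamma$ conjugating $(\sigma,\bar\sigma)$ to $(\bar\sigma,\sigma)$ normalizes $D$, hence preserves the unique invariant geodesic $\CA_D$, hence lies in $\Stab_\Gamma(\CA_D)=D$ when $D$ is maximal --- which is a worthwhile clarification but not a different argument.
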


Recall now that we are assuming that $\Gamma$ is finitely generated. This implies that it has only finitely many conjugacy classes of finite order elements and hence that the set $\Gamma\bs\CI_\Gamma$ is finite. Let $\CJ\subset\CI_\Gamma$ be a subset consisting of one representative of every $\Gamma$-conjugacy class. The map
\begin{equation}\label{eq quotient}
\bigsqcup_{\sigma\in\CJ}\big(\{\sigma\}\times(\CI_\Gamma\setminus\{\sigma\})\big)\to \Gamma\bs(\CI_\Gamma\times\CI_\Gamma\setminus\Delta)
\end{equation}
sending $(\sigma,\bar\sigma)$ to its conjugacy class is surjective and its restriction to the set $\{\sigma\}\times(\CI_\Gamma\setminus\{\sigma\})$ has fibers of cardinality equal to that of the normalizer $\CN_\Gamma(\sigma)$ of $\sigma$ in $\Gamma$.

Composing the maps \eqref{eq parametrisation quotient} and \eqref{eq quotient} we get thus a surjective map
\begin{equation}\label{eq map not complete}
\pi:\bigsqcup_{\sigma\in\CJ}\big(\{\sigma\}\times(\CI_\Gamma\setminus\{\sigma\})\big)\to\Gamma\bs\calD_\Gamma
\end{equation}
Let us recap what we can say about the cardinality of the fibers of \eqref{eq map not complete}. First, the preimage of $D\in\Gamma\bs\calD_\Gamma$ under \eqref{eq parametrisation quotient} has at most two points $(\sigma,\bar\sigma)$ and $(\bar\sigma,\sigma)$, with equality if $D$ is maximal. Now, the conjugacy class of $(\sigma,\bar\sigma)$ has $\vert\CN_\Gamma(\sigma)\vert$ preimages under \eqref{eq quotient}, and that of $(\bar\sigma,\sigma)$ has $\vert\CN_\Gamma(\bar\sigma)\vert$ preimages. Altogether we get that:

\begin{lem}\label{lem bound fiber}
For every conjugacy class of infinite dihedral subgroups $D=\langle\sigma,\bar\sigma\rangle\in\calD_\Gamma$ of $\Gamma$ we have
$$\vert\pi^{-1}(\langle\sigma,\bar\sigma\rangle)\vert\le \vert\CN_\Gamma(\sigma)\vert+\vert\CN_\Gamma(\bar\sigma)\vert$$
with equality if $D\in\calD_\Gamma^{\max}$ is maximal.\qed
\end{lem}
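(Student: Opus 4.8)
The plan is to read off the fiber of $\pi$ from the two-step factorization $\pi = p\circ q$, where $q$ denotes the map \eqref{eq quotient} and $p$ the map \eqref{eq parametrisation quotient}, so that $\pi^{-1}(D) = q^{-1}(p^{-1}(D))$. Since the cardinalities of the fibers of both $p$ and $q$ have already been pinned down above, the whole argument reduces to careful bookkeeping through this composition.

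First I would invoke the preceding lemma to describe $p^{-1}(D)$: the fiber of \eqref{eq parametrisation quotient} over the conjugacy class of $D=\langle\sigma,\bar\sigma\rangle$ consists of at most the two classes $[(\sigma,\bar\sigma)]$ and $[(\bar\sigma,\sigma)]$ of ordered pairs of involutions, and it consists of exactly these two distinct classes when $D$ is maximal, equivalently self-normalizing.

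Next I would count $q^{-1}$ of each of these two classes separately. A point of the domain $\bigsqcup_{\tau\in\CJ}\big(\{\tau\}\times(\CI_\Gamma\setminus\{\tau\})\big)$ mapping under $q$ to $[(\sigma,\bar\sigma)]$ must have first coordinate $\tau$ conjugate to $\sigma$; since $\CJ$ contains a single representative of each $\Gamma$-conjugacy class of involutions, all such points lie in one single component of the disjoint union, and by the fiber computation for \eqref{eq quotient} recorded above there are exactly $\vert\CN_\Gamma(\sigma)\vert$ of them (the normalizer cardinality being conjugation invariant). Symmetrically, $q^{-1}([(\bar\sigma,\sigma)])$ has exactly $\vert\CN_\Gamma(\bar\sigma)\vert$ points, lying in the component indexed by the representative conjugate to $\bar\sigma$.

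Summing over the at most two classes in $p^{-1}(D)$ then yields $\vert\pi^{-1}(D)\vert\le\vert\CN_\Gamma(\sigma)\vert+\vert\CN_\Gamma(\bar\sigma)\vert$, with equality precisely when both classes $[(\sigma,\bar\sigma)]$ and $[(\bar\sigma,\sigma)]$ occur and are distinct, which is exactly the case $D\in\calD_\Gamma^{\max}$. I do not anticipate any genuine obstacle: the only point requiring a little care is the degenerate situation in which the two ordered-pair classes coincide, so that $\sigma$ and $\bar\sigma$ are $\Gamma$-conjugate and $p$ is $1$-to-$1$ over $D$; there the fiber drops to $\vert\CN_\Gamma(\sigma)\vert=\tfrac12\big(\vert\CN_\Gamma(\sigma)\vert+\vert\CN_\Gamma(\bar\sigma)\vert\big)$, which is still consistent with the asserted inequality.
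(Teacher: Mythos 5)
Your proof is correct and follows essentially the same route as the paper: the paper also computes $\vert\pi^{-1}(D)\vert$ by composing the map \eqref{eq parametrisation quotient}, whose fiber over $D$ consists of the at most two classes $[(\sigma,\bar\sigma)]$ and $[(\bar\sigma,\sigma)]$ with exactly two when $D$ is maximal, with the map \eqref{eq quotient}, whose fibers over these classes have cardinality $\vert\CN_\Gamma(\sigma)\vert$ and $\vert\CN_\Gamma(\bar\sigma)\vert$ respectively, and your treatment of the degenerate case where the two ordered-pair classes coincide is a welcome extra precision that the paper leaves implicit. One small caveat: your closing assertion that equality holds \emph{precisely} when $D\in\calD_\Gamma^{\max}$ overstates the lemma---equality can also occur for suitable non-maximal $D$ (e.g.\ a dihedral subgroup of odd index in a maximal one, where no element of the stabilizer of the axis swaps the two generating involutions)---but since the lemma only claims equality \emph{if} $D$ is maximal, this does not affect the validity of your argument.
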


We are now ready to prove Proposition \ref{prop map}:

\begin{proof}[Proof of Proposition \ref{prop map}]
The basic observation needed to relate the statement of Proposition \ref{prop map} with what we have been discussing so far is that each involution $\sigma\in\CI_\Gamma$ is uniquely determined by its fixed points $p_\sigma\in\BH^2$. From this point of view, the map \eqref{eq map not complete} can be rewritten as 
\begin{equation}\label{eq map cloud1}
\begin{split}
\pi:&\bigsqcup_{(\sigma,\bar \sigma)\in\CJ\times\CJ}(\Gamma\cdot p_{\bar \sigma}\setminus\{p_\sigma\})\to\calD_\Gamma(L)\\
\pi:&(\sigma,\bar \sigma,\gamma\cdot p_{\bar \sigma})\mapsto\Gamma\text{-conjugacy class of }\langle \sigma,\gamma\bar \sigma\gamma^{-1}\rangle
\end{split}
\end{equation}
The map $\pi_L$ in \eqref{eq map cloud}, in the statement of the proposition, is just the restriction of this map to the set $\bigsqcup_{(\sigma,\bar \sigma)\in\CJ\times\CJ}\left(\Gamma\cdot p_{\bar \sigma}\cap B^*(p_\sigma,L)\right)$. Now we get from \eqref{eq length} that (groups in the conjugacy class of) the dihedral group $\pi(\sigma,\bar \sigma,\gamma p_{\bar \sigma})$ have length $d_{\BH^2}(p_\sigma,\gamma p_{\bar \sigma})$. It follows that the map $\pi_L$ in \eqref{eq map cloud} takes values in the desired set, and sujectivity follows from the surjectivity of $\pi$. The final claim of the proposition follows also automatically from Lemma \ref{lem bound fiber}.
\end{proof}

\section{}\label{sec sarnak}

In this section we prove Theorem \ref{sat sarnak} from the introduction. We restate it here for the convenience of the reader:

\begin{named}{Theorem \ref{sat sarnak}}
For every lattice $\Gamma\subset\PSL_2\BR$ with 2-torsion we have
$$\vert\Gamma\bs\calD_\Gamma(L)\vert\sim \frac{C(\Gamma)}{\vert\chi^{or}(\Gamma\bs\BH^2)\vert}\cdot e^L$$
where $\chi^{or}(\Gamma\bs\BH^2)$ is the Euler characteristic of the orbifold $\Gamma\bs\BH^2$, where 
$$C(\Gamma)=\frac 14\cdot\left(\sum_{\sigma\in\Gamma\bs\CI_\Gamma}\frac 1{\vert\CN_\Gamma(\sigma)\vert}\right)^2$$
and $\sim$ means that the ratio between both quatities tends to $1$ when $L\to\infty$.
\end{named}
\begin{proof}
The key fact we will need is Delsarte's classical result \cite{Delsarte} that 
\begin{equation}\label{eq delsarte}
\vert\Gamma\cdot y\cap B(x,R)\vert\sim \frac{\vol(B(x,R))}{\vert\Stab_\Gamma(y)\vert\cdot\vol(\Gamma\bs\BH^2)}
\end{equation}
when $R\to\infty$. Here $\vol(\Gamma\bs\BH^2)$ is the volume of the given orbifold. Via Gau\ss-Bonnet we can restate this in terms of the orbifold Euler-characteristic
$$\vert\Gamma\cdot y\cap B(x,R)\vert\sim \frac{e^R}{2\cdot\vert\Stab_\Gamma(y)\vert\cdot\vert\chi^{or}(\Gamma\bs\BH^2)\vert},$$
where we have used that $\vol(B(x,R))=2\pi(\cosh(R)-1)\sim\pi\cdot e^R$. 
Plugging this into Corollary \ref{prop enumerate} and noting that $\Stab_\Gamma(p_\sigma)=\CN_\Gamma(\sigma)$ for every involution $\sigma$ we get
\begin{align}
\vert\Gamma\bs\calD_\Gamma(L)\vert&\lesssim c\cdot \frac{e^L}{\vert\chi^{or}(\Gamma\bs\BH^2)\vert}\label{eq bound0}&\text{ for some }c>0,\\
\vert\Gamma\bs\calD_\Gamma(L)\vert&\gtrsim C(\Gamma)\cdot \frac{e^L}{\vert\chi^{or}(\Gamma\bs\BH^2)\vert}\label{eq bound1},&\text{ and}\\
\vert\Gamma\bs\calD_\Gamma^{\max}(L)\vert&\lesssim C(\Gamma)\cdot \frac{e^L}{\vert\chi^{or}(\Gamma\bs\BH^2)\vert}& \label{eq bound2}
\end{align}
where 
\begin{equation}\label{eq c}
C(\Gamma)=\frac 12\cdot \sum_{(\sigma,\bar\sigma)\in\CJ\times\CJ}\frac 1{\vert\CN_\Gamma(\bar\sigma)\vert\cdot(\vert\CN_\Gamma(\sigma)\vert+\vert\CN_\Gamma(\bar\sigma)\vert)}
\end{equation}
Here $\lesssim$ and $\gtrsim$ mean that the inequlities hold assymptotically when $L\to\infty$. Anyways, from \eqref{eq bound0} and \eqref{eq bound1} we get that $\vert\Gamma\bs\calD_\Gamma(L)\vert$ grows coarsely as $e^L$. It thus follows from Lemma \ref{lem most maximal} that
\begin{equation}\label{eq most are max}
\vert\Gamma\bs\calD_\Gamma(L)\vert\sim \vert\Gamma\bs\calD_\Gamma^{\max}(L)\vert
\end{equation}
From \eqref{eq bound1} and \eqref{eq bound2} we get a lower bound for the left side and and upper bound for the right side by the same quantity. We thus get
$$\vert\Gamma\bs\calD_\Gamma(L)\vert\sim C(\Gamma)\cdot \frac{e^L}{\vert\chi^{or}(\Gamma\bs\BH^2)\vert}$$
To conclude, elementary algebra yields that $C(\Gamma)$ as defined in \eqref{eq c} can be rewritten as in the statement of the theorem.
\end{proof}

\begin{bem}
Since it is going to be of some importance, we want to emphasize that Lemma \ref{lem most maximal} together with the exponential growth of the number of infinite dihedral subgroups implies that the proportion of non-maximal elements in $\Gamma\bs\calD_\Gamma(L)$ decreases exponentially when $L\to\infty$. 
\end{bem}

The fact that most infinite dihedral subgroups are maximal comes in handy now. Indeed, recall that we can associate to the conjugacy class of an infinite dihedral subgroup $D$ a reciprocal geodesic $\gamma_D$, that this map is surjective, and that it is in fact a bijection from the set of conjugacy classes of maximal infinite dihedral subgroups to the set of primitive reciprocal geodesics. Since most infinite dihedral subgroups are maximal we get that counting reciprocal geodesics is asymptotically equivalent to counting conjugacy classes of infinite dihedral subgroups. Corollary \ref{kor reciprocal} from the introduction follows then immediately from Theorem \ref{sat sarnak} together with the relation \eqref{eq trace length} between lengths of infinite dihedral subgroups and lengths of the associated reciprocal geodesics. 

\begin{named}{Corollary \ref{kor reciprocal}}
For every lattice $\Gamma\subset\PSL_2\BR$ with 2-torsion we have
$$\vert\{\gamma\text{ reciprocal geodesics in }\Gamma\bs\BH^2\text{ with }\tr(\gamma)\le X\}\vert\sim\frac{C(\Gamma)}{\vert\chi^{or}(\Gamma\bs\BH^2)\vert}\cdot X$$
as $X\to\infty$. Here notation is as in Theorem \ref{sat sarnak}.\qed
\end{named}

\section{}\label{sec bourgain kontorovich}
Let us now turn our attention to Theorem \ref{sat bourgain kontorovich}, which we also restate here:

\begin{named}{Theorem \ref{sat bourgain kontorovich}}
Let $\Gamma$ be a lattice with 2-torsion. For every $\delta>0$ there is a compact set $K_\delta\subset\Gamma\bs\BH^2$ such that
$$\vert\Gamma\bs\{D\in\calD_\Gamma(L)\text{ with }D\bs\CA_D\subset K_\delta\}\vert> e^{(1-\delta)\cdot L}$$
for all $L>0$ large enough.
\end{named}

We will reduce this theorem to the fact that the lattice $\Gamma\subset\PSL_2\BR$ has, for any $\delta<1$, a convex cocompact subgroup $\Gamma_0$ with critical exponent 
$$\delta(\Gamma_0)=\lim_{L\to\infty}\frac 1L\cdot\log\vert\{y\in\Gamma_0\cdot x_0\text{ with }d_{\BH^2}(x_0,y)\le L\}\vert>\delta$$
Indeed the following is true:

\begin{lem}\label{lem get large delta}
Every lattice $\Gamma\subset\PSL_2\BR$ has a sequence of finitely generated subgroups $\Gamma_k\subset\Gamma$ without parabolic elements and with 
$$\lim_{k\to\infty}\delta(\Gamma_k)=1$$ 
If $\Gamma$ has 2-torsion then $\Gamma_k$ can be chosen to also have 2-torsion for all $k$.
\end{lem}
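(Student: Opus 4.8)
The plan is to construct the subgroups $\Gamma_k$ explicitly and estimate their critical exponents from below. Since $\Gamma\subset\PSL_2\BR$ is a lattice, the quotient $\Gamma\bs\BH^2$ has finite area, and the limit set of $\Gamma$ is all of $\partial\BH^2$, so $\delta(\Gamma)=1$. The idea is that one can approximate this full critical exponent by convex cocompact (in particular, parabolic-free and finitely generated) subgroups. First I would reduce to a concrete combinatorial model: pass to a finite-index torsion-free surface-or-free subgroup if needed for intuition, but more directly, use that $\Gamma$ contains a Schottky-type free subgroup generated by high powers of hyperbolic elements whose attracting/repelling fixed points are spread out. By taking more and more generators (or longer and longer ``excursions'' in the Cayley graph), one produces convex cocompact free subgroups $F_k$ whose critical exponents tend to $1$; this is a standard fact (see e.g. work going back to Patterson, or the density of critical exponents of finitely generated subgroups of a lattice approaching that of the ambient group).

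The genuinely new requirement here is the \emph{last sentence}: when $\Gamma$ has $2$-torsion we must arrange that each $\Gamma_k$ also has $2$-torsion, i.e.\ contains an involution, while \emph{still} being parabolic-free with $\delta(\Gamma_k)\to 1$. The natural move is to fix one involution $\sigma\in\CI_\Gamma$ once and for all, and build $\Gamma_k$ as the group generated by $\sigma$ together with a carefully chosen large convex cocompact Schottky subgroup. Concretely I would take a free Schottky subgroup $F_k$ as above, chosen so that its defining ``ping-pong'' domains on $\partial\BH^2$ are disjoint from a neighborhood of the fixed point $p_\sigma$ and from its image under the generators, and then set $\Gamma_k=\langle\sigma\rangle * F_k'$ for a suitable finite-index or conjugated copy $F_k'$, realizing $\Gamma_k$ itself as a Schottky-type group on the enlarged generating set $\{\sigma\}\cup(\text{generators of }F_k)$ via a ping-pong argument. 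Because $\sigma$ is elliptic of finite order, adding it to a ping-pong scheme does not introduce parabolics, and the resulting group remains convex cocompact (hence geometrically finite with no cusps). The critical exponent only goes up when we add generators, so $\delta(\Gamma_k)\ge\delta(F_k)\to1$, giving the claim.

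The main steps in order are therefore: (i) recall/quote that for the lattice $\Gamma$ there exist finitely generated convex cocompact subgroups $F_k\le\Gamma$, free and parabolic-free, with $\delta(F_k)\to 1$ --- this is the density of critical exponents statement and is where I would cite an appropriate reference; (ii) fix an involution $\sigma$ and verify that a conjugate of $F_k$ can be chosen so that $\{\sigma\}$ together with the generators of that conjugate satisfies the ping-pong (Klein combination) criterion, yielding a discrete group $\Gamma_k=\langle\sigma\rangle * F_k$ which is a free product with amalgamation-free Schottky structure; (iii) check that $\Gamma_k$ is convex cocompact, in particular has no parabolics (ping-pong with only hyperbolic and elliptic-of-finite-order generators produces a geometrically finite group whose only torsion comes from the elliptic factors); and (iv) conclude $\delta(\Gamma_k)\ge\delta(F_k)\to1$ since enlarging the group can only increase the orbit-counting exponent.

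The hard part will be step (ii): arranging the ping-pong so that throwing the elliptic $\sigma$ into a Schottky group keeps everything discrete and convex cocompact \emph{and} keeps $2$-torsion genuinely present in $\Gamma_k$ (as opposed to $\sigma$ accidentally being conjugated away or the group collapsing). The delicate point is the placement of the ping-pong domains relative to the single fixed point $p_\sigma$ of $\sigma$: since $\sigma$ swaps the two ``sides'' it determines on the boundary, I must choose the Schottky domains for $F_k$ to sit strictly inside one complementary region so that $\sigma$ maps them off themselves, verifying the Klein combination hypotheses. Controlling this while simultaneously demanding $\delta(F_k)\to1$ (which forces the domains to become numerous and to fill up the boundary) is the genuine tension, and I would resolve it by noting we only need the $F_k$-domains to avoid an arbitrarily small neighborhood of $p_\sigma$, which is compatible with $\delta(F_k)\to1$ because losing a small boundary arc costs an arbitrarily small amount of critical exponent.
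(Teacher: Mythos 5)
Your overall architecture---quote the existence of convex cocompact free subgroups $F_k\le\Gamma$ with $\delta(F_k)\to1$, then graft on an involution by a Klein/ping-pong combination and use monotonicity of the critical exponent---is a genuinely different route from the paper's, and the torsion-grafting half of it is workable. But there is a real gap at your step (i). The existence of finitely generated, parabolic-free subgroups of an arbitrary nonuniform lattice with critical exponent tending to $1$ is not a black box one can cite around: it is precisely the analytic heart of the lemma (the $2$-torsion clause is the comparatively easy add-on), and the paper's entire proof is devoted to it --- pass via Scott's LERF theorem to a finite-index $\Gamma'$ with at least two cusps and a cone point of order $2$, kill the parabolics with an intersection-number homomorphism $\alpha:\Gamma'\to\BZ$ (the kernel $\Gamma''$ is parabolic-free and still contains the involution), get $\lambda_0(\Gamma''\bs\BH^2)=0$ from Brooks/Ballmann--Matthiesen--Polymerakis because $\Gamma'/\Gamma''\cong\BZ$ is amenable, and convert to $\delta\to1$ along an exhaustion via the Patterson--Sullivan formula $\delta=\frac12+\sqrt{\frac14-\lambda_0}$. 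Moreover, the mechanism you sketch for (i) is backwards: a Schottky group generated by \emph{high powers} of hyperbolic elements has ping-pong domains shrinking onto finitely many fixed points and its critical exponent tends to $0$, not $1$; spreading out the fixed points and adding more such generators does not rescue this. For the modular group one can fall back on Hensley-type continued-fraction estimates (as Bourgain--Kontorovich do), but for a general lattice your step (i) is exactly the statement to be proven.

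There is also a concrete error in the ping-pong geometry of step (ii), though it is fixable. The involution $\sigma$ is elliptic: its fixed point $p_\sigma$ lies in $\BH^2$, not on $\partial\BH^2$, and $\sigma$ acts \emph{freely} on the boundary circle (roughly, rotation by $\pi$ about $p_\sigma$). So $\sigma$ does not ``determine two sides on the boundary'' with a fixed point to avoid; the Klein-combination hypothesis forces the Schottky domains of $F_k$ into an arc $A$ with $A\cap\sigma(A)=\emptyset$, hence into \emph{less than half} the circle. Your proposed resolution of the tension --- that one only needs the domains to avoid a small neighborhood of $p_\sigma$, at small cost in exponent --- therefore fails: if the domains cover more than half the circle, $\sigma$ necessarily maps them across themselves. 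The correct fix is the one you mention only in passing: conjugate $F_k$ by a high power of a hyperbolic $\gamma\in\Gamma$ whose repelling fixed point lies off the (nowhere dense) limit set of $F_k$, squeezing limit set and domains into an arbitrarily small arc at zero cost, since conjugation preserves $\delta$ and a Cantor set of dimension close to $1$ fits in any arc. With that repair, steps (ii)--(iv) go through (Maskit combination gives a discrete, geometrically finite $\langle\sigma\rangle * F_k'$ with no parabolics, visibly containing $2$-torsion, and $\delta\ge\delta(F_k')=\delta(F_k)$), giving an alternative to the paper's handling of torsion, where the involution simply survives in $\ker\alpha$. As submitted, though, the proposal assumes the main point and misdescribes the combination geometry.
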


\begin{bei}
How do these groups look like for the modular group $\PSL_2\BZ$? Well, in this case one can take $\Gamma_k$ to be the subgroup generated by the set $\{\eta^i\sigma\eta^{-i}\text{ with }i=-k,\dots,k\}$ where $\eta,\sigma\in\PSL_2\BZ$ correspond to the M\"obious transformations $\eta(z)=z+2$ and $\sigma(z)=-z^{-1}$.
\end{bei}

Lemma \ref{lem get large delta} will not surprise anybody and we suspect that in one way or the other it might well have appeared already in the literature. We prove it below using some amount of (very classical) technology but it can be done using elementary means and we encourage the reader to try to do it by themselves. Anyways, before going any further let us use this lemma to settle Theorem \ref{sat bourgain kontorovich}:

\begin{proof}[Proof of Theorem \ref{sat bourgain kontorovich}]
We get from Lemma \ref{lem get large delta} a finitely generated subgroup $\Gamma'\subset\Gamma$, with 2-torsion, without parabolic elements, and with $\delta(\Gamma')> 1-\delta$. Finite generation and lack of parabolics imply that $\Gamma'$ is convex cocompact and hence that there is a compact subset $K\subset\Gamma\bs\BH^2$ which contains $D\bs\CA_D$ for every infinite dihedral group whose conjugacy class admits a representative contained in $\Gamma'$.

Fix now an involution $\sigma\in\Gamma'\subset\Gamma$, choose the set $\CJ$ of representatives of $\Gamma\bs\CI_\Gamma$ in such a way that $\sigma\in\CJ$, and restrict the map $\pi_L$ in Proposition \ref{prop map} to the set $\{(\sigma,\sigma)\}\times(\Gamma'\cdot p_{\sigma})$. From the proposition we get that this map is at most $2\cdot\vert\CN_\Gamma(\sigma)\vert$-to-1. This means that at least $\frac 1{2\cdot\vert\CN_\Gamma(\sigma)\vert}\vert\Gamma'\cdot p_\sigma\cap B^*(p_\sigma,L)\vert$ elements in $\Gamma\bs\calD_\Gamma(L)$ have representatives contained in $\Gamma'$. From the very definition of the critical exponent and from the bound $\delta(\Gamma')>1-\delta$ we get that the cardinality of $\Gamma'\cdot p_\sigma\cap B^*(p_\sigma,L)$ grows faster than $e^{(1-\delta)\cdot L}$. Altogether we get that, for large $L$, there are at least $e^{(1-\delta)\cdot L}$ elements $D$ in $\Gamma\bs\calD_\Gamma(L)$ with $D\bs\CA_D$ contained in $K$. We are done.
\end{proof}

Now, let us prove Lemma \ref{lem get large delta}:

\begin{proof}[Proof of Lemma \ref{lem get large delta}]
The proof has two different steps. In a first algebraic/topological step we give a sequence of groups $\Gamma_k$. Then we use the relation between bottom of the spectrum and critical exponent to show that the groups $\Gamma_k$ have critical exponent tending to $1$. 

Anyways, let us start. Since this is the case we are interested in we are going to assume that $\Gamma$ has both 2-torsion and parabolic elements. This assumption implies that $\Gamma$ contains a subgroup $H$ isomorphic to $\BZ*\BZ*\BZ/2\BZ$ where the two first free factors correspond to maximal parabolic subgroups of $\Gamma$. Now, $H$ is the intersection of the finite index subgroups of $\Gamma$ containing it \cite{Scott}. It follows thus that $\Gamma$ has a finite index subgroup $\Gamma'$ such that the associated orbifold $\Gamma'\bs\BH^2$ has at least two cusps and a cone point of order $2$. We will find our subgroups inside $\Gamma'$.

Let now $c_0,\dots,c_k$ be the cusps of $\Gamma'\bs\BH^2$ and let $\eta=\eta_1\cup\dots\cup\eta_k\subset\Gamma'\bs\BH^2$ be a simple arc system contained in the regular part of our orbifold, with $\eta_i$ joining $c_0$ and $c_i$. We orient all those arcs in such a way that $c_0$ is always the origin and denote by 
$$\alpha:\Gamma'\to\BZ$$
the homomorphism given as follows: represent $\gamma\in\Gamma'=\pi_1(\Gamma'\bs\BH^2)$ by an oriented loop in the regular part of $\Gamma'\bs\BH^2$ and let $\alpha(\gamma)$ be the algebraic intersection number of that loop with the arc system $\eta=\eta_1\cup\dots\cup\eta_k$. It is a surjective homomorphism and by construction no element in 
$\Gamma''=\ker(\alpha)$ is parabolic and $\Gamma''$ has 2-torsion.

Let now $\Gamma_k\subset\Gamma''$ be any sequence of finitely generated subgroups with $\Gamma_k\subset\Gamma_{k+1}$ for all $k$ and with $\Gamma''=\cup_k\Gamma_k$. These are our groups and all that is left to argue is that $\delta(\Gamma_k)\to 1$ when $k$ grows.

\begin{claim}\label{claim bla}
$\lim_{k\to\infty}\delta(\Gamma_k)=1$.
\end{claim}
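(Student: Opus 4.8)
The plan is to prove Claim \ref{claim bla} by combining two classical facts: first, that $\Gamma'' = \bigcup_k \Gamma_k$ is the union of an increasing chain of finitely generated subgroups, so the critical exponents $\delta(\Gamma_k)$ are nondecreasing and satisfy $\lim_{k\to\infty}\delta(\Gamma_k) = \delta(\Gamma'')$; and second, the relationship (due to Patterson, Sullivan, and others) between the critical exponent of a Fuchsian group and the bottom of the $L^2$-spectrum of the Laplacian on the associated surface. The upper bound $\delta(\Gamma_k) \le 1$ is automatic since all these groups sit inside a lattice. So the content is entirely in the lower bound $\delta(\Gamma'') \ge 1$.

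To see $\delta(\Gamma'') = 1$, I would first record that the limit of the increasing exponents equals $\delta(\Gamma'')$: any orbit point of $\Gamma''$ in a ball lies in $\Gamma_k$ for $k$ large enough, so the counting functions for $\Gamma_k$ converge pointwise up to that of $\Gamma''$, and a standard argument (e.g. via the comparison of Poincar\'e series) gives $\sup_k \delta(\Gamma_k) = \delta(\Gamma'')$. Then I would invoke the Elstrodt--Patterson--Sullivan formula relating the critical exponent $\delta$ of a nonelementary Fuchsian group to the bottom $\lambda_0$ of the spectrum of the Laplacian on the quotient: for $\delta \ge \frac12$ one has $\lambda_0 = \delta(1-\delta)$, so $\delta(\Gamma'') = 1$ is equivalent to $\lambda_0(\Gamma''\backslash\BH^2) = 0$, i.e. the surface $\Gamma''\backslash\BH^2$ has no spectral gap.

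The heart of the matter is therefore to show $\lambda_0(\Gamma''\backslash\BH^2) = 0$, and this is where the specific construction of $\Gamma''$ as the kernel of the homomorphism $\alpha:\Gamma' \to \BZ$ enters. Since $\Gamma''\backslash\BH^2$ is an infinite cyclic cover of the finite-volume orbifold $\Gamma'\backslash\BH^2$ (with deck group $\BZ = \Gamma'/\Gamma''$), I would use the fact that $\BZ$ is amenable. By Brooks's theorem on the bottom of the spectrum of amenable covers, a normal cover of a finite-volume hyperbolic surface with amenable deck group has $\lambda_0 = 0$ (equivalently, the cover is amenable in the sense that it satisfies a F\o lner condition inherited from the base). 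This immediately yields $\lambda_0(\Gamma''\backslash\BH^2) = 0$ and hence $\delta(\Gamma'') = 1$.

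The main obstacle I anticipate is being careful about the hypotheses of these classical theorems in the \emph{orbifold} and \emph{infinite-volume} settings. Brooks's theorem is usually stated for manifolds or finite-area surfaces, and here the base $\Gamma'\backslash\BH^2$ is a finite-volume orbifold while the cover $\Gamma''\backslash\BH^2$ has infinite volume; one must confirm that the amenability criterion applies with cone points present, which is routine but requires noting that the orbifold singularities are compactly contained and do not affect the F\o lner-type estimate coming from the $\BZ$-action. Likewise, the Patterson--Sullivan formula $\lambda_0 = \delta(1-\delta)$ must be cited in the form valid for geometrically infinite (or at least non-cocompact, infinite-covolume) Fuchsian groups, for which it is indeed known. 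Once these citations are pinned down the argument is short; alternatively, one could bypass the spectral detour entirely and estimate the Poincar\'e series of $\Gamma''$ directly using the F\o lner sets in the $\BZ$-direction, which is the elementary route the authors allude to.
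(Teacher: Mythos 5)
Your first two ingredients coincide with the paper's starting point and are sound: the paper likewise gets $\lambda_0(\Gamma''\bs\BH^2)=0$ from amenability of the deck group $\BZ=\Gamma'/\Gamma''$ (citing Brooks, and Ballmann--Matthiesen--Polymerakis to cover the finite-volume orbifold base you rightly worry about), and the Elstrodt--Patterson--Sullivan relation \eqref{eq sullivan} is indeed available beyond the geometrically finite setting, so $\delta(\Gamma'')=1$ is a legitimate conclusion. The genuine gap is your reduction $\sup_k\delta(\Gamma_k)=\delta(\Gamma'')$. Monotone convergence of the counting functions, or of the Poincar\'e series $\sum_{\gamma\in\Gamma_k}e^{-s\,d(x,\gamma x)}\nearrow\sum_{\gamma\in\Gamma''}e^{-s\,d(x,\gamma x)}$, yields only the trivial inequality $\delta(\Gamma'')\ge\sup_k\delta(\Gamma_k)$: exponential growth rates are not continuous under increasing pointwise limits, because the constants in $N_{\Gamma_k}(R)\le C_k\,e^{(\delta(\Gamma_k)+\epsilon)R}$ may blow up with $k$. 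What your argument actually needs is the reverse inequality $\delta(\Gamma'')\le\sup_k\delta(\Gamma_k)$, and no ``comparison of Poincar\'e series'' delivers it. The statement is true, but its proof requires a real idea --- for instance, extract a weak-$*$ limit of the Patterson--Sullivan densities of the $\Gamma_k$, check that it is a conformal density of dimension $\lim_k\delta(\Gamma_k)$ invariant under all of $\Gamma''$, and apply Sullivan's shadow lemma to bound $\delta(\Gamma'')$ from above. As written, the pivotal step of your proof would fail.

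The paper sidesteps this issue entirely by performing the finite-generation approximation at the level of $\lambda_0$ rather than $\delta$, where it is genuinely easy: since $\lambda_0$ is an infimum of Rayleigh quotients over compactly supported smooth functions, given $f\in C^\infty_c(\Gamma''\bs\BH^2)$ with $\CR(f)\le\epsilon$ one takes a compact subsurface $\Sigma$ containing $\supp(f)$; its finitely generated fundamental group lies in $\Gamma_k$ for all large $k$, so $\Sigma$, and with it $f$, lifts to $\Gamma_k\bs\BH^2$, giving $\lambda_0(\Gamma_k\bs\BH^2)\le\epsilon$. Then \eqref{eq sullivan} is applied only to the finitely generated (hence geometrically finite) groups $\Gamma_k$, where it is classical. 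Restructuring your argument this way removes both your reliance on the increasing-union continuity of $\delta$ and your need for the Patterson--Sullivan formula for geometrically infinite groups. Note also that your proposed ``elementary alternative'' --- estimating the Poincar\'e series of $\Gamma''$ directly via F\o lner sets --- has the same defect: Claim \ref{claim bla} concerns the $\Gamma_k$, and divergence of the series for $\Gamma''$ says nothing about the finitely generated stages without the missing transfer step.
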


To establish this claim we will make use of a result of Patterson \cite{Patterson} and Sullivan \cite{Sullivan} asserting that for any discrete group $G\subset\PSL_2\BR$ with $\lambda_0(G\bs\BH^2)<\frac 14$ the critical exponent is given by the formula:
\begin{equation}\label{eq sullivan}
\delta(G)=\frac 12+\sqrt{\frac 14-\lambda_0(G\bs\BH^2)}.
\end{equation}
Note that $\lambda_0(\Gamma'\bs\BH^2)=0$ because $\Gamma'$ is a lattice. Now, since $\Gamma''\trianglelefteq\Gamma'$ is normal with $\Gamma'/\Gamma''\simeq\BZ$ amenable we get from \cite{Brooks}, or rather from \cite{Ballman}, that $\lambda_0(\Gamma''\bs\BH^2)=\lambda_0(\Gamma'\bs\BH^2)=0$. This means that for all $\epsilon>0$ there is a compactly supported function $f\in C^\infty_c(\Gamma''\bs\BH^2)$ with Rayleigh quotient $\CR(f)\le\epsilon$. Now, if $\Sigma$ is any compact connected subsurface containing the support of $f$ there is $k_0$ such that $\Gamma_k$ is contained in $\pi_1(\Sigma)$ for all $k\ge k_0$. This means that the surface $\Sigma$ lifts under the cover $\Gamma_k\bs\BH^2\to\Gamma''\bs\BH^2$. Lifting the function $f$ we get a function on $\Gamma_k\bs\BH^2$ which still has Rayleigh quotient less than $\epsilon$. In particular, 
$$\lambda_0(\Gamma_k\bs\BH^2)\le\epsilon$$
for all $k\ge k_0$. Claim \ref{claim bla} follows now from \eqref{eq sullivan}.
\end{proof}

\section{}\label{sec uniform}

In this section we prove Theorem \ref{sat equidistribution modular surface}, that is the equidistribution of reciprocal geodesics, making use of Proposition \ref{prop sweedy pie} below, a different equidistribution result which no expert will find surprising and which we prove in the next section. Consider namely for $x,y\in\BH^2$ the measures 
\begin{equation}\label{eq chichu is the best}
\tilde\mu_L^{x,y}=\sum_{z\in\Gamma\cdot x\cap B^*(y,L)}\overrightarrow{yz}
\end{equation}
where $\overrightarrow{yz}$ is the measure on $T^1\BH^2$ obtained by integrating, with respect to arc length, along the lift to the unit tangent bundle of the geodesic arc from $y$ to $z$. In the next section we will prove:

\begin{prop}\label{prop sweedy pie}
Let $\Gamma\subset\PSL_2\BR$ be a lattice and $\tilde\mu_L^{x,y}$ as in \eqref{eq chichu is the best}. For any compactly supported function $f\in C_c(T^1\Gamma\bs\BH^2)$ and any two $x,y\in\BH^2$ we have
$$\lim_{L\to\infty}\frac 1{\Vert\tilde\mu_L^{x,y}\Vert}\int \tilde f\ d\tilde\mu^{x,y}_L=\int f\ d\vol$$
where $\tilde f$ is the lift of $f$ to $T^1\BH^2$.
\end{prop}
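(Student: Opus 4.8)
The plan is to pass to the group $G=\PSL_2\BR$, which we identify with $T^1\BH^2$ via the simply transitive action, choosing the base vector at $y$ so that $y$ corresponds to the stabilizer $K=\Stab_G(y)\cong\mathrm{PSO}(2)$ and the geodesic flow becomes right translation $g\mapsto ga_t$ by the diagonal subgroup $A=\{a_t\}$. Using the Cartan (polar) decomposition $G=KAK$ relative to $K$, a lift $\hat x$ of $x$ and an orbit element give $\gamma\hat x=k_\gamma a_{r_\gamma}k'_\gamma$, where $r_\gamma=d_{\BH^2}(y,\gamma x)$ and $k_\gamma\in K$ records the direction $v_\gamma=k_\gamma v_0$ at $y$ pointing toward $\gamma x$. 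With this notation each segment measure is $\overrightarrow{yz}=\int_0^{r_\gamma}\delta_{k_\gamma a_t}\,dt$ on $G$, pushed to $\Gamma\bs G$, so that testing against the lift $\tilde f$ gives
\[
\int\tilde f\,d\tilde\mu_L^{x,y}=\sum_{\gamma}\mathbf 1[0<r_\gamma\le L]\int_0^{r_\gamma}\tilde f(\Gamma k_\gamma a_t)\,dt .
\]

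The analytic heart is the equidistribution of expanding spheres, which is where mixing enters. By the Howe--Moore theorem the geodesic flow (the right $A$-action) is mixing on $L^2(\Gamma\bs G)$ with respect to the normalized Liouville measure $\vol$. A standard thickening argument, spreading a test function in the local stable, unstable and flow directions (the wavefront lemma), upgrades mixing to the statement that for every $F\in C_c(\Gamma\bs G)$ and every $g\in G$,
\[
\lim_{t\to\infty}\frac1{2\pi}\int_K F(\Gamma g k a_t)\,dk=\int_{\Gamma\bs G}F\,d\vol ;
\]
in words, the image under the time-$t$ geodesic flow of the unit-direction circle at any point equidistributes toward $\vol$.

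It then remains to replace the discrete sum over orbit points by these circle averages. Delsarte's count (used already in Section \ref{sec sarnak}) gives $\#\{\gamma:\ r_\gamma\le R\}\sim c\,e^{R}$ for a constant $c>0$, so the number of orbit points with $r_\gamma$ in a unit shell around $T$ grows like $c\,e^{T}$; moreover the directions $k_\gamma$ of the orbit points equidistribute on $K$ as the shell radius $T\to\infty$ (equidistribution of lattice points in sectors, again a consequence of the same mixing input applied to sectors). Hence for large $T$ and any $\Phi\in C(K)$ the average of $\Phi(k_\gamma)$ over the shell is close to $\frac1{2\pi}\int_K\Phi\,dk$; taking $\Phi_T(k)=\int_0^{T}\tilde f(\Gamma k a_t)\,dt$ and summing over shells weighted by $c\,e^{T}\,dT$, the numerator equals, up to an $o(Le^{L})$ error,
\[
\int_0^{L} c\,e^{T}\left(\frac1{2\pi}\int_K\int_0^{T}\tilde f(\Gamma k a_t)\,dt\,dk\right)dT .
\]
The inner time-integral is a Ces\`aro average of the circle averages $\frac1{2\pi}\int_K\tilde f(\Gamma k a_t)\,dk$, each of which tends to $\int f\,d\vol$ by the expanding-sphere statement, so $\frac1T\int_0^T(\cdots)\,dt\to\int f\,d\vol$. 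Running the same computation with $f\equiv1$ gives $\Vert\tilde\mu_L^{x,y}\Vert\sim c\int_0^L Te^{T}\,dT\sim cLe^{L}$, and dividing the two expressions yields exactly $\int f\,d\vol$, independently of $x$ and $y$.

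The main obstacle is controlling the non-uniformity of these limits at small radius and small flow time, where neither the spherical averages nor the angular distribution have yet settled. This is handled by the observation that the total mass is $\Vert\tilde\mu_L^{x,y}\Vert\sim cLe^{L}$, so the segments with $r_\gamma\le L/2$, and the initial flow time $t\le t_0$ on each remaining segment, together contribute only $O(e^{L})=o(Le^{L})$ and may be discarded; on the complementary range the convergence from mixing is uniform. Two further routine points are the justification of the thickening step that converts the $K$-averages into honest $G$-mixing, and the verification that the orbit count and the mixing limit retain no dependence on the basepoints $x,y$, which is what makes the final limit the same for all of them.
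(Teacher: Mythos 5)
Your outline follows the same skeleton as the paper's proof (mixing gives equidistribution of flowed circles; Delsarte-type counting gives the shell sizes; small-$t$ and small-radius regimes are discarded against the total mass $\sim cLe^{L}$), but the central step --- passing from the discrete shell sum to the circle average --- has a genuine gap. You apply equidistribution of the directions $k_\gamma$ in the shell of radius $T$ to the test function $\Phi_T(k)=\int_0^{T}\tilde f(\Gamma k a_t)\,dt$. Sector equidistribution, as you invoke it, is a statement about a \emph{fixed} continuous $\Phi$ on $K$, with a rate depending on the modulus of continuity of $\Phi$; but $\Phi_T$ varies with $T$ and its regularity degenerates exponentially: right translation by $a_t$ expands the $K$-direction by a factor $e^{t}$, so $k\mapsto\tilde f(\Gamma k a_t)$ oscillates at angular scale $e^{-t}$, and for $k,k'$ at angle $\eta$ one only gets $\vert\Phi_T(k)-\Phi_T(k')\vert\lesssim 1+\big(T-\log(1/\eta)\big)_+$. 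Since the final division is by $\Vert\tilde\mu_L^{x,y}\Vert\sim cLe^{L}$, you need the shell average of $\Phi_T$ to match $\frac1{2\pi}\int_K\Phi_T\,dk$ up to $o(T)$, which forces you to equidistribute the $\sim e^{T}$ shell directions against arcs of width comparable to $e^{-t}$ for all $t$ up to $T-O(1)$, \emph{uniformly}. An arc of width $e^{-t}$ contains only $\sim e^{T-t}$ orbit directions, so nothing in the fixed-sector statement forces the count in such an arc to have settled unless $T-t$ is large; your blanket assertion that ``on the complementary range the convergence from mixing is uniform'' is precisely the thing that has to be proved. Note also that your discarding step removes small $t$ and small $r_\gamma$ but not the regime $t$ close to $T$, where the claim as stated fails (that range is harmless, contributing $O(1)$ per segment, but it must be cut out explicitly).

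This scale-matched, uniform sector count is exactly the content of Claim \ref{claim2} in the paper's proof: there one cuts the sphere $S_{r\epsilon}(y)$ into arcs $I_i$ of pinched length, flows them to sectors $U_i\cdot g_{(k-r)\epsilon}$, reruns the equidistribution-of-spheres argument for the flowed arcs, and then uses the thickening argument that recovers Delsarte's asymptotics (\cite[III.3.3, III.3.5]{Bekka-Meyer}) to show that the number of orbit points in each sector is proportional to $\ell(I_i)$, uniformly over the relevant $i$ once $k-r\ge k_2$. The two cutoffs $k_1$ (circles well mixed at the position along the segment, your $t\ge t_0$) and $k_2$ (shell directions resolved at angular scale $e^{-r\epsilon}$, the cutoff you are missing) are the quantitative skeleton of the argument. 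With that lemma supplied, your computation --- shell counts $ce^{T}$, Ces\`aro averaging of the circle means, total mass $\sim cLe^{L}$ --- goes through and is essentially a reparametrization of the paper's proof in Cartan coordinates; without it, the interchange at the heart of your numerator estimate is unjustified.
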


Assuming Proposition \ref{prop sweedy pie} for the time being, we prove Theorem \ref{sat equidistribution modular surface}:

\begin{named}{Theorem \ref{sat equidistribution modular surface}}
If $\Gamma\subset\PSL_2\BR$ is a lattice which has 2-torsion then the measures $\mu_L$ as in \eqref{eq measures} converge projectively to the Liouville probability measure $\vol$. More precisely we have 
$$\lim_{L\to\infty}\frac 1{\Vert\mu_L\Vert}\int f\ d\mu_L=\int f\ d\vol$$
for every compactly supported continuous function on $T^1\Gamma\bs\BH^2$.
\end{named}

\begin{proof}
The measure $\mu_L$ is the measure on $T^1\Gamma\bs\BH^2$ obtained by integrating over the reciprocal geodesics in $\Gamma\bs\BH^2$ associated to infinite dihedral groups with length at most $L$. By Proposition \ref{prop map} we have a map
\begin{equation*}
\begin{split}
\pi_L:&\bigsqcup_{(\sigma,\bar \sigma)\in\CJ\times\CJ}\left(\Gamma\cdot p_{\bar \sigma}\cap B^*(p_\sigma,L)\right)\to\Gamma\bs\calD_\Gamma(L)\\
\pi_L:&(\sigma,\bar \sigma,\gamma\cdot p_{\bar \sigma})\mapsto\Gamma\text{-conjugacy class of }\langle \sigma,\gamma\bar \sigma\gamma^{-1}\rangle
\end{split}
\end{equation*}
of which we think as being a ``parametrization'' of the set of conjugacy classes of infinite dihedral groups. Recall that here $\CJ\subset\CI_\Gamma$ is a set of representatives of $\Gamma\bs\CI_\Gamma$, the set of conjugacy classes on involutions in $\Gamma$, and that $p_\sigma$ is the unique fixed point of the involution $\sigma$. With this notation consider  the measure
$$\hat\mu_L=\sum_{(\sigma,\bar \sigma)\in\CJ\times\CJ}\left[\frac 1{\vert \CN_\Gamma(\sigma)\vert+\vert\CN_\Gamma(\bar \sigma)\vert}
\sum_{q\in\Gamma\cdot p_{\bar \sigma}\cap B^*(p_\sigma,L)}\vec\gamma_{\pi_L(\sigma,\bar \sigma,q)}\right]$$
The measures $\mu_L$ and $\hat\mu_L$ are supported by exactly the same set of orbits of the geodesic flow by surjectivity of the map $\pi_L$. Moreover, on each component the two measures are multiples of each other, the multiple given by the cardinality of the fibers of $\pi_L$. In fact, the bound for the cardinality of the fibers in Proposition \ref{prop map} implies that these multiples are at most $1$, with equality whenever the dihedral group $\pi_L(\sigma,\bar \sigma,\gamma\cdot p_{\bar \sigma})$ is maximal and say has length at least $\frac 12L$. Since the proportion of non-maximal dihedral groups of length at most $L$ is exponentially small (compare with \eqref{eq most are max} and with the comment following the proof of Theorem \ref{sat sarnak}) we deduce that to prove that the measures $\frac 1{\Vert\mu_L\Vert}\mu_L$ converge to $\vol$ it suffices to show that
$$\lim_{L\to\infty}\frac 1{\Vert\hat\mu_L\Vert}\hat\mu_L=\vol.$$
Note now that the segment with end points $p_\sigma$ and $(\gamma\bar \sigma\gamma^{-1})(p_\sigma)$ projects to the geodesic $\gamma_{\pi_L(\sigma,\bar \sigma,\gamma\cdot p_{\bar \sigma})}$ under the covering map $\BH^2\to\Gamma\bs\BH^2$ and that the point $\gamma\cdot p_{\bar \sigma}$ is the midpoint of this segment. This means that the measure $\vec\gamma_{\pi_L(\sigma,\bar \sigma,\gamma\cdot p_{\bar \sigma})}$ is the projection to $T^1\Gamma\bs\BH^2$ of the measure $\overrightarrow{p_\sigma(\gamma p_{\bar \sigma})}+\overrightarrow{(\gamma p_{\bar \sigma})((\gamma\bar \sigma\gamma^{-1})(p_\sigma))}$ where, as earlier, $\overrightarrow{xy}$  is the measure on $T^1\BH^2$ obtained by integrating with respect to arc length along the lift to the unit tangent bundle of the geodesic arc from $x$ to $y$. Translating the second measure by $\gamma^{-1}$ we then get that the measure $\vec\gamma_{\pi_L(\sigma,\bar \sigma,\gamma\cdot p_{\bar \sigma})}$ is also the projection of the measure $\overrightarrow{p_\sigma(\gamma p_{\bar \sigma})}+\overrightarrow{p_{\bar \sigma}((\bar \sigma\gamma^{-1})(p_\sigma))}$. Altogether we get that $\hat\mu_L$ is the projection to $T^1\Gamma\bs\BH^2$ of the measure
$$\tilde\mu_L=2\cdot\sum_{(\sigma,\bar \sigma)\in\CJ\times\CJ}\left[\frac 1{\vert \CN_\Gamma(\sigma)\vert+\vert\CN_\Gamma(\bar \sigma)\vert}\sum_{q\in\Gamma\cdot p_{\bar \sigma}\cap B^*(p_\sigma,L)}\overrightarrow{p_\sigma q}\right]$$
on $T^1\BH^2$. With the same notation as in \eqref{eq chichu is the best} we can rewrite this as 
$$\tilde\mu_L=2\cdot\sum_{(\sigma,\bar \sigma)\in\CJ\times\CJ}\left[\frac 1{\vert \CN_\Gamma(\sigma)\vert+\vert\CN_\Gamma(\bar \sigma)\vert}\cdot 
\tilde\mu_L^{p_{\bar \sigma},p_\sigma}\right]$$
Proposition \ref{prop sweedy pie} asserts that the projections of the measures $\tilde\mu_L^{p_{\bar \sigma},p_\sigma}$ converge projectively to $\vol$ when $L\to\infty$. It follows that the same is true for $\hat\mu_L$, the projection of $\tilde\mu_L$. We are done.
\end{proof}

It remains to prove Proposition \ref{prop sweedy pie}.

\section{}\label{sec mixing}

In this section we prove Proposition \ref{prop sweedy pie}. Let us fix from now on $x,y\in\BH^2$ and $f\in C_c(T^1\Gamma\bs\BH^2)$, say with $\Vert f\Vert_\infty\le 1$, and note that it suffices to prove that for all $\delta>0$ there is $L_0$ with 
$$\left\vert\frac 1{\Vert\tilde\mu_L^{x,y}\Vert}\int \tilde f\ d\tilde\mu^{x,y}_L-\int f\ d\vol\right\vert<\delta$$
for all $L\ge L_0$. Given such a $\delta$ we choose $\epsilon>0$ with $\vert\tilde f(p)-\tilde f(q)\vert\le\delta$ for all $p,q\in\BH^2$ wich are at most at distance $10\epsilon$ of each other---this is possible because $\tilde f$ is the lift of the compactly supported function $f$. Note also that since $\epsilon>0$ can be reduced as much as we want we can think of $L=k\cdot\epsilon$ being an integer multiple of $\epsilon$. This means that it suffices to prove that
\begin{equation}\label{eq claim prop}
\limsup_{k\to\infty}\left\vert\frac 1{\Vert\tilde\mu_{k\cdot\epsilon}^{x,y}\Vert}\int \tilde f\ d\tilde\mu^{x,y}_{k\cdot\epsilon}-\int f\ d\vol\right\vert<\delta
\end{equation}
Now, slicing the ball of radius $k\cdot\epsilon$ as the union
$$B(y,k\cdot\epsilon)=\cup_{r=0}^{k-1}A(y,r\cdot\epsilon,\epsilon)$$
of concentric annuli $A(y,r\cdot\epsilon,\epsilon)=B(y,(r+1)\cdot\epsilon)\setminus B(y,r\cdot\epsilon)$, we write our measure as
$$\tilde\mu^{x,y}_{k\cdot\epsilon}=\sum_{r=0}^{k-1}\tilde\nu^{x,y}_{r}\ \ \ \ \text{ where }\ \ \ \ 
\tilde\nu^{x,y}_{r}=\sum_{z\in\Gamma\cdot x\cap A(y,r\cdot\epsilon,\epsilon)}\overrightarrow{yz}$$
Evidently, \eqref{eq claim prop} follows if we prove that
\begin{equation}\label{eq claim2}
\limsup_{k\to\infty}\left\vert\frac 1{\Vert\tilde\nu_{k}^{x,y}\Vert}\int \tilde f\ d\tilde\nu^{x,y}_{k}-\int f\ d\vol\right\vert<\delta
\end{equation}
We are not done yet decomposing our measures. Consider namely
$$\tilde\nu^{x,y}_{k}=\sum_{r=0}^{k}\tilde\lambda^{x,y}_{k,r}\ \ \ \ \text{ where }\ \ \ \ \tilde\lambda^{x,y}_{k,r}=\tilde\nu^{x,y}_k\vert_{A(y,r\cdot\epsilon,\epsilon)}$$
is for $r=0,\dots,k$ the restriction of $\tilde\nu_k^{x,y}$ to the annulus $A(y,r\cdot\epsilon,\epsilon)$. 
For fixed $k$, all of the measures $\tilde\lambda^{x,y}_{k,r}$ (save possibly the one with $r=k$, where it might be smaller), have the same total measure
\begin{equation}\label{eq tired}
\Vert\tilde\lambda^{x,y}_{k,r}\Vert=\epsilon\cdot\vert\Gamma\cdot x\cap A(y,k\cdot\epsilon,\epsilon)\vert\sim \frac{\epsilon\cdot\vol(A(y,k\cdot\epsilon,\epsilon))}{\vert\Stab_{\Gamma}(x)\vert\cdot\vol(\Sigma)}
\end{equation}
where the claim about asymptotics follows for example from Delsarte's orbit counting result \eqref{eq delsarte}. Anyways, the point is that to prove that $\tilde\nu_k^{x,y}$ satisifes \eqref{eq claim2} it suffices to prove that the measures $\tilde\lambda_{k,r}^{x,y}$ satisfy the analogue claim for most $r$. More concretely, \eqref{eq claim2}, and hence \eqref{eq claim prop} and thus Proposition \ref{prop sweedy pie}, follows once we establish the following:

\begin{claim}\label{claim2}
There are $k_1$ and $k_2$ such that for all $k>k_1+k_2$ and all choices of $r_k\in[k_1,k-k_2]$ we have
$$\limsup_{k\to\infty}\left\vert\frac 1{\Vert\tilde\lambda_{k,r_k}^{x,y}\Vert}\int \tilde f\ d\tilde\lambda^{x,y}_{k,r_k}-\int f\ d\vol\right\vert<\delta$$
\end{claim}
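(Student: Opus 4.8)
The plan is to prove Claim \ref{claim2} by reducing the equidistribution of the annular measures $\tilde\lambda_{k,r}^{x,y}$ to the equidistribution of the geodesic flow, via the classical mixing of the geodesic (or horocycle) flow on $\Gamma\bs\PSL_2\BR$. The key geometric observation is that each annulus $A(y,r\cdot\epsilon,\epsilon)$ is thin (width $\epsilon$), so over it the geodesic arc $\overrightarrow{yz}$ is, up to an error controlled by our choice of $\epsilon$, a geodesic flow push-off of the \emph{direction} in which $z$ lies from $y$. More precisely, the unit tangent vectors appearing in the support of $\tilde\lambda_{k,r}^{x,y}$ are all within distance $O(\epsilon)$ of the time-$r\epsilon$ geodesic flow image of the sphere of unit vectors based at $y$, weighted by how the orbit $\Gamma\cdot x$ distributes on the sphere at infinity as seen from $y$. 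By the choice of $\epsilon$ relative to $\delta$ at the start of the section, integrating $\tilde f$ against $\tilde\lambda_{k,r}^{x,y}$ differs by at most $\delta$ from integrating $\tilde f$ against this ``flowed sphere'' measure.

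The heart of the matter is then to identify the limit of these flowed-sphere measures as $r\to\infty$. First I would fix coordinates: identify $T^1\BH^2$ with $\PSL_2\BR$ and describe the sphere of unit vectors at $y$ together with the counting measure of $\Gamma\cdot x\cap A(y,r\epsilon,\epsilon)$ in terms of the visual (angular) distribution of the orbit as seen from $y$. As $r$ grows the geodesic flow expands this family of vectors along unstable horocyclic directions, and the standard argument is that the images of a fixed compact piece of an unstable horocycle equidistribute toward the Liouville/Haar measure $\vol$ by mixing of the geodesic flow. The role of the cutoffs $k_1$ and $k_2$ is exactly this: one throws away a bounded inner core of annuli (small $r<k_1$), where the vectors have not yet been flowed long enough to equidistribute, and a bounded outer shell (large $r>k-k_2$), where the last annulus may be incomplete as noted after \eqref{eq tired}; on the remaining range the flow time $r\epsilon$ is large enough that mixing has kicked in. The uniformity over $r_k\in[k_1,k-k_2]$ comes from the fact that the rate of equidistribution under mixing is uniform on compact sets once the flow time exceeds a threshold depending only on $f$ and $\epsilon$.

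Concretely, the main step I would carry out is: writing the vectors over the annulus as $g_{r\epsilon}\cdot v$ for $v$ ranging over (an $\epsilon$-neighborhood of) the unit sphere at $y$, with $g_t$ the geodesic flow, express $\frac{1}{\Vert\tilde\lambda_{k,r}^{x,y}\Vert}\int\tilde f\,d\tilde\lambda_{k,r}^{x,y}$ as an average of $f$ over a piece of $\Gamma$-orbit of an expanding horospherical arc, and then invoke equidistribution of expanding horocycles (equivalently, mixing) to conclude this average converges to $\int f\,d\vol$, uniformly in the relevant range of $r$. The normalization is handled by \eqref{eq tired}, which shows all the completed annuli carry essentially the same mass, so that the projective limit of the sum is governed by the common limit of its summands.

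The hard part will be making the passage from ``counting points of $\Gamma\cdot x$ in a thin annulus and attaching geodesic arcs'' to ``averaging a test function over an expanding horocyclic piece'' fully rigorous and uniform: one must control both the angular distribution of the orbit $\Gamma\cdot x$ on spheres of large radius (so that no direction is over- or under-counted in a way that survives in the limit) and the $O(\epsilon)$ geometric approximation that replaces the genuine arc $\overrightarrow{yz}$ by a clean geodesic-flow image of a based sphere. I expect the orbit-equidistribution-on-spheres input and the uniform mixing estimate to be the two technical pillars; once they are in place, summing over $r\in[k_1,k-k_2]$ and absorbing the negligible boundary annuli yields Claim \ref{claim2}, and with it Proposition \ref{prop sweedy pie}.
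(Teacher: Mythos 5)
Your first half is sound and matches the paper: the thin-annulus approximation, the Riemann-sum reduction over boxes $U_i$ pushed off segments $I_i\subset S_{r\cdot\epsilon}(y)$, and the role of $k_1$ as the threshold beyond which mixing makes the flowed spheres $\delta$-equidistributed (this is exactly \eqref{eq spheres equidistribute}). The genuine gap is your reading of $k_2$. You assign it the cosmetic role of discarding a possibly incomplete outermost annulus (the parenthetical after \eqref{eq tired}), but $k_2$ is forced by something substantive that your outline leaves uncontrolled: the weights of your ``flowed-sphere measure'' are not determined by anything at radius $r\cdot\epsilon$. The mass that $\tilde\lambda_{k,r}^{x,y}$ puts on $U_i$ counts the orbit points of $\Gamma\cdot x$ in the \emph{outer} annulus $A(y,k\cdot\epsilon,\epsilon)$ lying in the shadow cone of $U_i$, i.e.
$$\frac{\tilde\lambda_{k,r}^{x,y}(U_i)}{\Vert\tilde\lambda_{k,r}^{x,y}\Vert}=\frac{\vert\Gamma\cdot x\cap [U_i\cdot g_{(k-r)\epsilon}]\vert}{\vert\Gamma\cdot x\cap A(y,k\cdot\epsilon,\epsilon)\vert},$$
and these counts are proportional to $\ell(I_i)$ only in the limit $(k-r)\to\infty$: that is the content of \eqref{eq delsarte cheese}, obtained from equidistribution of the flowed segments $I_i\cdot g_{(k-r)\epsilon}$ together with the standard derivation of Delsarte's asymptotics. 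If $k-r$ stays bounded, no such control is possible for a concrete reason: all points of $\Gamma\cdot x$ project to the single point $\bar x\in\Gamma\bs\BH^2$, so for $k-r=j$ bounded the measure $\tilde\lambda_{k,k-j}^{x,y}$ projects into the ball of radius roughly $(j+2)\epsilon$ around $\bar x$, and its normalization cannot converge to $\vol$ no matter how large $k$ (hence $r$, hence your mixing time) is. Under your reading one could take $k_2=1$, and the Claim would then be false for $r_k$ close to $k$.

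The same misunderstanding shows up when you describe the weights as ``the counting measure of $\Gamma\cdot x\cap A(y,r\cdot\epsilon,\epsilon)$'' or as the visual distribution of the orbit ``on the sphere at infinity as seen from $y$.'' The relevant orbit points sit at radius $\approx k\cdot\epsilon$, and they are seen from $y$ through cones of angular width comparable to $\epsilon\, e^{-r\epsilon}$, a scale shrinking with $r$; so the ``no direction is over- or under-counted'' pillar you correctly identify at the end requires an equidistribution statement at scales depending on $r$, uniform over the range $r_k\in[k_1,k-k_2]$, and this is exactly what \eqref{eq delsarte cheese} supplies once $k-r\ge k_2$. A fixed statement about the limiting angular distribution at infinity does not give this uniformity. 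Once you replace your $k_2$-as-edge-effect by this counting input, combined with the normalization \eqref{eq tired} and the fact that $\ell(I_i)$ and $\ell(S_{r\cdot\epsilon}(y))$ grow at the same rate under the flow, your outline becomes the paper's proof of Claim \ref{claim2}.
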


The role of $k_1$ is to ensure that the spheres of radius $r_k\cdot\epsilon$ around $x$ are well-mixed. Recall indeed that we can identify $T^1\BH^2$ with $\PSL_2\BR$ and that when doing so the geodesic flow becomes right multiplication by diagonal matrices $g_t\in\SL_2\BR$ with entries $e^{\pm\frac 12t}$. Mixing of the geodesic flow of $\Gamma\bs\BH^2$ (see \cite[III.2.3]{Bekka-Meyer}) implies that the projection of (the outer normal of) the spheres $S_t(y)=(T^1_y\BH^2)\cdot g_t$ gets equidistributed in $T^1\Gamma\bs\BH^2$ (see \cite[III.3.3]{Bekka-Meyer}). It follows that there is some $k_1$ with
\begin{equation}\label{eq spheres equidistribute}
\left\vert\int_{S_{r\cdot\epsilon}(y)}\tilde f - \int f\ d\vol\right\vert<\delta\ \ \ \ \text{ for all }r>k_1.
\end{equation}
Suppose from now on that we fix some $r>k_1$ and cut the sphere $S_{r\cdot\epsilon}(y)$ into segments $I_1,I_2,\dots,I_N$ of length $\ell(I_i)\in[\epsilon,2\epsilon]$ for all $i$. Denote then by
$$U_i=\cup_{t\in[0,\epsilon]} I_i\cdot g_t$$
the little surface area obtained by pushing $I_i$ via the geodesic flow for time $\epsilon$. By the choice of $\epsilon$ we have that
$$\sup_i\sup_{p,q\in U_i}\vert f(p)-f(q)\vert<\delta$$
This means that, choosing for all $i$ some point $p_i\in I_i$, we have
$$\left\vert\int_{S_{r\cdot\epsilon}(y)}\tilde f-\frac 1{\ell(S_{r\cdot\epsilon}(y))}\sum_i f(p_i)\cdot\ell(I_i)\right\vert<\delta$$
and similarly
$$\left\vert\frac 1{\Vert\tilde\lambda_{k,r}^{x,y}\Vert}\int \tilde f\ d\tilde\lambda_{k,r}^{x,y}-\frac 1{\Vert\tilde\lambda_{k,r}^{x,y}\Vert}\sum_i f(p_i)\cdot\tilde\lambda_{k,r}^{x,y}(U_i)\right\vert<\delta$$
These two bounds, together with \eqref{eq spheres equidistribute} and the assumption that $\Vert f\Vert_\infty\le 1$ imply that
\begin{align*}
\left\vert\frac 1{\Vert\tilde\lambda_{k,r}^{x,y}\Vert}\int\tilde f\ d\tilde\lambda_{k,r}^{x,y}\right.&-\left.\int f\ d\vol\right\vert\le\\
&\le\delta+\left\vert\frac 1{\Vert\tilde\lambda_{k,r}^{x,y}\Vert}\int\tilde f\ d\tilde\lambda_{k,r}^{x,y}-\int_{S_{r\cdot\epsilon}(y)}\tilde f\,\right\vert\\
&< 3\delta + \sum_{\tiny\begin{array}{c}i\in\{1,\ldots, N\}\text{ with}\\ U_i\cap\supp(f)\neq\emptyset\end{array}}\left\vert\frac{\ell(I_i)}{\ell(S_{r\cdot\epsilon}(y))}-\frac{\tilde\lambda_{k,r}^{x,y}(U_i)}{\Vert\tilde\lambda_{k,r}^{x,y}\Vert}
\right\vert
\end{align*}
Note that the proportion of $\lambda_{k,r}^{x,y}$ in $U_i$ is given by
$$\frac{\tilde\lambda_{k,r}^{x,y}(U_i)}{\Vert\tilde\lambda_{k,r}^{x,y}\Vert}=\frac{\vert\Gamma\cdot x\cap [U_i\cdot g_{(k-r)\epsilon}]\vert}{\vert\Gamma\cdot x\cap A(y,k\cdot\epsilon,\epsilon)\vert}$$
where $[V]\subset\BH^2$ denotes the image of $V\subset T^1\BH^2$ under the standard projection. 

Now, the exact same argument used to prove the equidistribution of spheres \cite[III.3.3]{Bekka-Meyer} shows that the spherical segments $I_i\cdot g_{(k-r)\epsilon}$ also get equidistributed when $(k-r)\to\infty$. Then, the argument allowing to recover Delsarte's assymptotics \eqref{eq delsarte} from the equidistribution of spheres (see \cite[III.3.5]{Bekka-Meyer}) yields that 
\begin{equation}\label{eq delsarte cheese}
\vert\Gamma\cdot x\cap [U_i\cdot g_{(k-r)\epsilon}]\vert\sim\frac{\vol([U_i\cdot g_{(k-r)\epsilon}])}{\vert\Stab_{\Gamma}(x)\vert\cdot\vol(\Sigma)}
\end{equation}
when $(k-r)\to\infty$. It follows thus from \eqref{eq tired} and \eqref{eq delsarte cheese} that
$$\frac{\tilde\lambda_{k,r}^{x,y}(U_i)}{\Vert\tilde\lambda_{k,r}^{x,y}\Vert}\sim\frac{\vol([U_i\cdot g_{(k-r)\epsilon}])}{\vol(A(y,k\cdot\epsilon,\epsilon))}$$
Now, since $f$ has compact support and since the lengths of the segments $I_i$ are pinched between two positive constants, we get that the last assymptotic statement holds uniformly for all $i$ with $I_i\cap\supp(\tilde f)$, meaning that there is $k_2$ with
$$\frac{(1- \delta)\cdot \vol([U_i\cdot g_{(k-r)\epsilon}])}{\vol(A(y,k\cdot\epsilon,\epsilon))}\le\frac{\tilde\lambda_{k,r}^{x,y}(U_i)}{\Vert \tilde\lambda_{k,r}^{x,y}\Vert}\le \frac{(1+\delta)\cdot \vol([U_i\cdot g_{(k-r)\epsilon}])}{\vol(A(y,k\cdot\epsilon,\epsilon))}$$
for all $i$ such that $U_i\cap\supp(f)\neq\emptyset$ and all $k-r\geq k_2$. Given that $\epsilon$ is fixed (and very small) we get that the ratio between volumes is comparable to the ratio between lengths. Also, the length of $I_i$ and the length of $S_{r\cdot\epsilon}(y)$ grow at exactly the same rate when we apply the geodesic flow. Combining these two facts we get 
$$\frac{(1- \delta-2\epsilon)\cdot \ell(I_i)}{\ell(S_{r\cdot\epsilon}(y))}\le\frac{\tilde\lambda_{k,r}^{x,y}(U_i)}{\Vert \tilde\lambda_{k,r}^{x,y}\Vert}\le \frac{(1+\delta+2\epsilon)\cdot \ell(I_i)}{\ell(S_{r\cdot\epsilon}(y))}$$
for all $i$ such that $U_i\cap\supp(f)\neq\emptyset$ and all $k\ge k_2$. This implies then that 
$$\sum_{\tiny\begin{array}{c}i\in\{1,\ldots,N\}\text{ with}\\ U_i\cap\supp(f)\neq\emptyset\end{array}}\left\vert\frac{\ell(I_i)}{\ell(S_{r\cdot\epsilon}(y))}-\frac{\tilde\lambda_{k,r}^{x,y}(U_i)}{\Vert\tilde\lambda_{k,r}^{x,y}\Vert}
\right\vert\le \delta+2\epsilon$$
and hence that 
$$\left\vert\int f\ d\vol-\frac 1{\Vert\lambda_{k,r}^{x,y}\Vert}\int \tilde f\ d\tilde\lambda_{k,r}^{x,y}\right\vert< 4\delta+2\epsilon$$
meaning that, up to changing one $\delta$ for another and after choosing $\epsilon$ really really small, we have proved Claim \ref{claim2}. Having proved the claim we have also proved Proposition \ref{prop sweedy pie}.\qed

\end{document}